\def\BibTeX{{\rm B\kern-.05em{\sc i\kern-.025em b}\kern-.08em
		T\kern-.1667em\lower.7ex\hbox{E}\kern-.125emX}}
\begin{document}
	
	\title{Robust Localization with Bounded Noise: Creating a Superset of the Possible Target Positions via Linear-Fractional
Representations}
	\author{Jo\~ao Domingos,  Cl\'audia Soares and  Jo\~ao Xavier}
	\newtheorem{theorem}{Theorem}
	\newtheorem{lemma}{Lemma}
	\newtheorem{remark}{Remark}
	\newtheorem{example}{Example}
	\newtheorem{result}{Result}
	\newtheorem{corollary}{Corollary}[theorem]
	\newtheorem{definition}{Definition}
	\maketitle
	
\begin{abstract}
Locating \textcolor{black}{a target} is key in many applications, namely in high-stakes real-world scenarios, like detecting humans or obstacles in vehicular networks. \textcolor{black}{In scenarios where precise statistics of the measurement noise are unavailable, applications require localization methods that assume minimal knowledge on the noise distribution. We present a scalable algorithm delimiting a tight superset of all possible target locations, assuming range measurements to known landmarks, contaminated with bounded noise and unknown distributions. This superset is of primary interest in robust statistics since it is a tight majorizer of the set of Maximum-Likelihood (ML) estimates parametrized by noise densities respecting two main assumptions: (1) the noise distribution is supported on a ellipsoidal uncertainty region and (2) the measurements are non-negative with probability one. } We \textcolor{black}{create} the superset through convex relaxations that use Linear Fractional Representations (LFRs), a well-known technique in robust control.
 \textcolor{black}{ For low noise regimes the supersets created by our method double the accuracy of a standard semidefinite relaxation. For moderate to high noise regimes our method still improves the benchmark but the benefit tends to be less significant, as both supersets tend to have the same size (area). }
\end{abstract}
	
	\begin{IEEEkeywords}
Target Localization, Bounded Noise, Robust Estimation, Maximum-Likelihood, Quadratic Programming, Positive Semidefinite Relaxation, Linear Fractional Representations. 
	\end{IEEEkeywords}
	
	\section{Introduction}
	\label{sec:introduction}
	
\textcolor{black}{\IEEEPARstart{T}{he} problem of locating a unknown target from noisy range measurements is a long-established problem in the signal processing community. A popular approach is the Maximum-Likelihood (ML) estimator which, given an assumed noise distribution $f_{\Delta}$, computes a point estimate $\hat{x}$ of the target position. The downfall of relying on a single point estimate $\hat{x}$  is that, for a fixed set of measurements, the value of $\hat{x}$ is dependent on the noise distribution $f_\Delta$. In concrete, different choices of $f_\Delta$ will yield different estimates $\hat{x}$ -- see figure~\ref{fig:motivating_plot}. In this paper we assume that a precise noise density $f_{\Delta}$ is unavailable and, to overcome this challenge, we propose an algorithm that creates a superset of all possible target estimates $\hat{x}$ consistent with two minimal assumptions on the problem: (1) the noise is bounded and (2) the measurements are always non-negative.
\\~\\
In terms of figure~\ref{fig:motivating_plot} we want to create a superset  of all the blue points that arise from distributions $f_{\Delta}$ respecting the aforementioned minimal assumptions. Our algorithm is particularly useful for applications that require minimal knowledge on the noise distribution $f_{\Delta}$ such as search and rescue operations. For example assume that a plane is lost in sea, and we have access to range measurements to known landmarks. By creating a superset of all estimates $\hat{x}$ we can design a simple rescue area (a rectangle, for example) that contains all positions where the plane might have crashed. In this case, specifying a single position $\hat{x}$ for the plain  might lead to a wrong location thus, leading to the failure of the operation. Specifying a region where the plain must necessarily lie enables an efficient successful search and rescue operation.}
\begin{figure}[tb]
\centering
	\includegraphics[width=6cm]{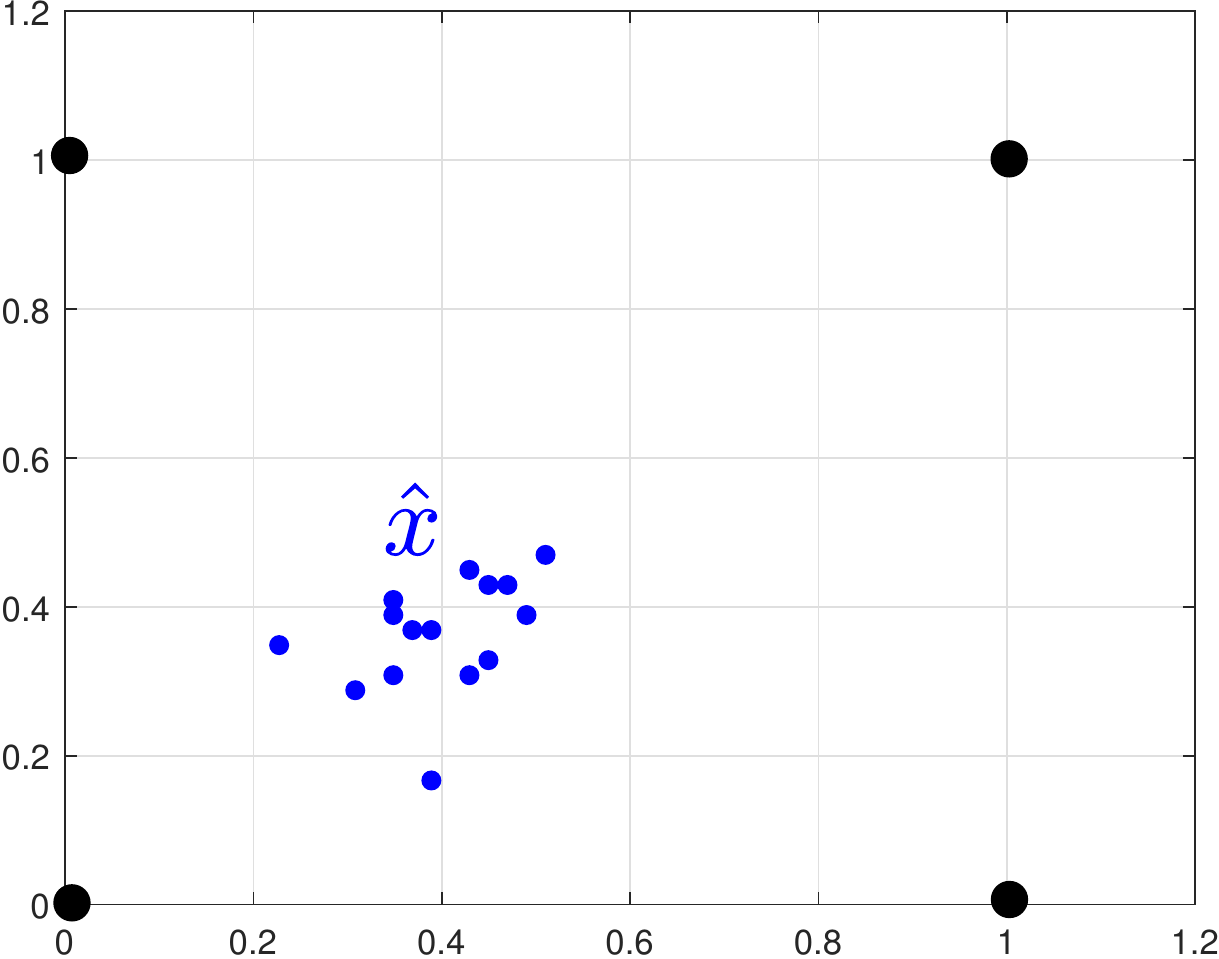}
	\caption{ \textcolor{black}{ Different ML point estimates $\hat{x}$ for different noise densities $f_{\Delta}$. The black dots represent reference landmarks in a square region with 1Km side. Landmarks measure the distance from themselves to a unknown target. We assume that measurements are affected by additive noise with some density $f_{\Delta}$. The blue dots represent ML estimates $\hat{x}$ for different noise densities $f_{\Delta}$ with a baseline uncertainty (standard deviation) of $0.1$ Km ($100$ meters). We consider $15$ Gaussian densities $f_{\Delta}=\mathcal{N}(0,D)$ with distinct diagonal covariances $D$. The noise densities $f_{\Delta}$ vary by assuming that zero, one, two or three landmarks have precise sensors with an uncertainty of $0.01$ Km ($10$ meters). The ML estimates $\hat{x}$ are computed by a  grid search over $[-0.5,1.5]^2$. Different choices of $f_{\Delta}$ lead to distinct estimates $\hat{x}$: if we are insure about the uncertainty of each sensor which estimate $\hat{x}$ should we choose?} }
	\label{fig:motivating_plot}
\end{figure}
\subsection{Literature Review}
\label{sec:related-wk}

\paragraph*{Pointwise range-based target localization}
The literature around range-based target localization is vast when considering point estimates of the position of a target from range measurements to precisely known locations, termed anchors. The seminal paper by Beck~\cite{beck2008exact}, was followed by many, notably~\cite{yang2009efficient}, which also models process noise as i.i.d.\ Gaussian random variables. The work by Oguz-Ekim et al.~\cite{ouguz2010convex} considers outlier measurements, by assuming  measurement errors with Laplace distribution. Other relevant literature on range-based target localization is~\cite{5714759,6731596,7747483,8521706,7926459,8417931,9089013,gao2020majorization}.

\paragraph*{Robust network localization} The problem of robust network localization, where there are many unlocalized nodes with access to a few noisy pairwise distance measurements affected by outliers, has been addressed by a few works. One of them relies on identifying outliers from regular data and discarding them, as in Ihler et al.~\cite{IhlerFisherMosesWillsky2005} that formulates the problem using a probabilistic graphical model to encode the data distribution. 
Vaghefi et al.~\cite{vaghefi2015cooperative} proposed a semidefinite relaxation of a model considering unknown, unbounded outlier errors for the cooperative localization scenario. Forero el al.~\cite{ForeroGiannakis2012} presented a robust multidimensional scaling based on regularized least squares, where the robust regularization term was relaxed to a convex function. Korkmaz and van der Veen~\cite{KorkmazVeen2009} use the Huber loss composed with a discrepancy function between measurements and estimated distances, in order to achieve robustness to outliers. Recently, Soares and Gomes~\cite{soares2021strong} have proposed a distributed Huber-based point estimator for range measurements corrupted with Gaussian measurement noise, and non-Gaussian, long-tail noise, modelled as Laplace or Cauchy noise. The work in~\cite{chakraborty2020cooperative} uses range-only data to initialize an extended Kalman Filter for sensor fusion data.

\paragraph*{Robust range-based localization}
The space of range-based \emph{robust target localization} was explored by several authors, for example the work in~\cite{Sun2004}, where a robust M-estimator was applied to target localization in a bootstrapping scheme in~\cite{YinZoubirFritscheGustafsson2013}, motivated by outlier measurements generated by non-line of sight (NLOS) propagation of signals, where bouncing on obstacles causes large delays in the time of flight, and thus a large error in the estimated distance. 
Bootstrapping with the Huber M-estimator was already used for centralized target localization in [6], where the Huber estimation was used in a bootstrapping scheme.
Wang and colleagues~\cite{wang2014nlos} address the target localization problem by relaxing the NLOS-aware problem to both a semidefinite and a second-order cone problem.
Later, Tomic et al.~\cite{tomic2017robust} modeled  an additive Gaussian noise term, plus a bounded NLOS bias as a nuissance parameter to be jointly estimated, and formalized the problem as a trust region subproblem, solved by bisection. Still, the model considers an infinite support for the errors and the solution is a pointwise estimate of the target position.
The recent work of Chen and colleagues~\cite{Chen2019} puts forward a new model for LOS/NLOS based on a multiplicative transformation of the additive data model, considering Exponential noise. The authors argue that this type of noise is routinely found in dense urban areas.
Other relevant works include~\cite{7426865,9257378,guvenc2009survey,prorok2012online,cong2005nonline,9350568,chen2019improved,marano2010nlos}.

\paragraph*{Target localization with bounded noise}
Another line of research assumes unknown and bounded noise, which, in practice, could be a more reasonable model, considering that setup and hardware specifications are, in general, known. The authors in~\cite{Shi2016} consider bounded errors with unknown distribution in range measurements, and compute a point-wise estimate by minimizing the worst-case position estimation error.

\paragraph*{Delimiting the set of all possible solutions for an estimation problem considering bounded noise}
A few papers examine the important problem of, given a data model, determining the region where all possible estimates compatible with observed data may lie. Eldar et al.~\cite{eldar2008minimax} develop a convex solution using Lagrange duality to a data model with linear dependency relative to the unknown parameter and added Gaussian noise.
\vspace{-0.1cm}
\subsection{\textcolor{black}{Problem Statement}} We consider the problem of delimiting the region of possible positions of a target, given noisy range measurements from known landmarks. Denoting the position of the target by $x \in {\mathbf R}^d$ (in practice, $d \in \{  2, 3\}$) and the position of the landmarks by $r_m \in {\mathbf R}^d$, $1 \leq m \leq M$, we have the model 
\begin{equation} y_m = \left\| x - r_m \right\| + u_m. \label{eq:model} \end{equation}
Here, $y_m \in {\mathbf R}$ is the $m$th available measurement and $u_m \in {\mathbf R}$ represents unknown additive noise. The symbol $\left\| \cdot \right\|$ denotes the  Euclidean norm. To simplify notation, in~\eqref{eq:model} and throughout the paper  each constraint involving $m$ is to be understood as a set of $M$ constraints, one per $m$ in the set $\{ 1, \ldots, M \}$. 

\paragraph*{Assumptions}
We make two assumptions, one on the available measurement vector $y = ( y_1, \ldots, y_M ) \in {\mathbf R}^M$ and one on the unknown noise vector $u = ( u_1, \ldots, u_M) \in {\mathbf R}^M$:
\begin{itemize}
	\label{assumptions}
	\item \emph{Assumption 1 ($y$ is non-negative):} the measurement vector given by~\eqref{eq:model} is nonnegative, that is, $y_m \geq 0$ for each $m$.
	\item \emph{Assumption 2 ($u$ is bounded):} the noise vector lies in a known ellipsoid; specifically, $u$ is a member of an ellipsoid $\mathcal{E}(0,\Sigma)$ centered at the origin and described by  \begin{align} \mathcal{E}(0,\Sigma) = \{ v \in {\mathbf R}^M \colon v^T \Sigma^{-1} v \leq 1 \}, \label{eq:ellipsoid} \end{align} 
	where $\Sigma$ is a known $M\times M$ positive-definite  matrix.
\end{itemize}
Both assumptions are mild. Assumption 1 is natural because each $y_m$ is a range measurement. As such, $y_m$ represents a physical distance, which can only be non-negative. Assumption 2 essentially says that the unknown noise vector has a bounded support, which is also a mild assumption: in general, hardware characteristics and the physical setup will naturally limit how large noisy measurements can get. If the support were unbounded, components of $u$ could get negative enough to create the paradox of negative measurements in the data model~\eqref{eq:model}, for any fixed target position. Finally,  Assumption 2  does not impose any particular noise distribution (we return to Assumption 2 in more detail in Section~\ref{sec:stats}).

\paragraph*{The set of possible target positions} The available measurement vector  $y$ can typically be resolved into infinitely many pairs $(x, u) \in {\mathbf R}^d \times \mathcal{E}(0,\Sigma)$ that satisfy~\eqref{eq:model}. Our main interest is in the set $\mathcal X$ of all possible target positions $x$, denoted \begin{equation} {\mathcal X} = {\mathcal X}_1 \cap {\mathcal X}_2, \label{eq:defX} \end{equation}
where ${\mathcal X}_1$ is the set of target positions that can explain the measurement vector $y$, and ${\mathcal X}_2$ is the set of target positions that always lead to nonnegative measurements. Formally, 
\begin{align} {\mathcal X}_1 &= \left\{ \right. x \in {\mathbf R}^d \colon \, \exists \,\,  u \in {\mathcal E}(0,\Sigma),\,\,  y_m = \left\| x - r_m \right\| + u_m \left. \right\} \label{eq:defX1} \\
{\mathcal X}_2 &= \left\{ \right. x \in {\mathbf R}^d \colon \, \forall \,\,  u \in {\mathcal E}(0,\Sigma),\,\,  \left\| x - r_m \right\| + u_m \geq 0 \left. \right\} \label{eq:defX2} \end{align}
The set ${\mathcal X}_1$ depends on the available measurement $y$ and describes the set of target positions that can explain the $y$ at hand. The set ${\mathcal X}_2$ does not depend on $y$ and is included to guarantee internal consistency between the data model and Assumption 1 about the non-negativity of range measurements. Specifically, the set ${\mathcal X}_2$ retains the target positions that always generate nonnegative measurements.
By using definition~\eqref{eq:ellipsoid} we can rewrite set ${\mathcal X}_2$ as   \begin{equation} {\mathcal X}_2 = \left\{ x \in {\mathbf R}^d \colon \left\| x - r_m \right\|^2 \geq \Sigma_{mm} \right\}, \label{eq:defX2} \end{equation} where $\Sigma_{mm}$ is the $m$th diagonal entry of the matrix $\Sigma$ in~\eqref{eq:ellipsoid}. \textcolor{black}{ Figure~\ref{fig:set_X_M_1} represents set $\mathcal{X}$ in the simple scenario where a single landmark tries to locate the target. Note that $\mathcal{X}$ is already non-convex. Furthermore our numerical experiments (Section~\ref{sec:numerical_results}) consider a setup with $M=3$ anchors. In this case ${\mathcal X}$ can have an unwieldy shape: it can be nonconvex with sharp extreme points or even disconnected (see Figure~\ref{fig:worst_average_best} in\footnote{In Figure~\ref{fig:worst_average_best}  we are computing a grid approximation of $\mathcal{X}$ so, in full rigour, we can only describe $\mathcal{X}$ up to the resolution of the mentioned grid. More details in section~\ref{sec:numerical_results}.} Section~\ref{sec:numerical_results}).}
\begin{figure}[tb]
	\hspace{0.3cm}
	\includegraphics[width=8.1cm]{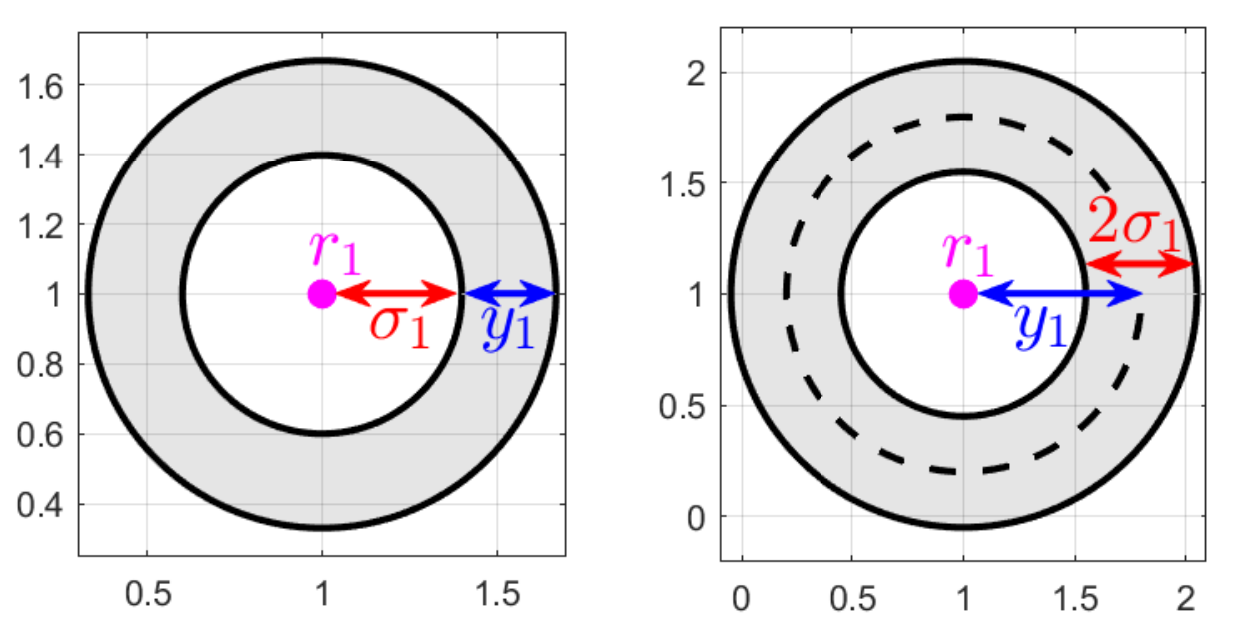}
	\caption{ \textcolor{black}{Set $\mathcal{X}$ (in gray) for $M=1$, that is, when we have only one non-negative measurement $y_1\geq 0$, one reference landmark $r_1\in \mathbf{R}^d$ and one uncertainty level $\sigma_1:=\sqrt{\Sigma_{1,1}}$. The left figure plots $\mathcal{X}$ when $y_1$ is small, i.e., $y_1\in[0,2\sigma_1]$. The right figure plots $\mathcal{X}$ when $y_1$ is large enough, i.e., $y_1>2\sigma_1$. When $y_1$ is large enough, we get that $\mathcal{X}=\mathcal{X}_1$, that is, the non-negativeness assumption is already expressed in $\mathcal{X}_1$.} }
	\label{fig:set_X_M_1}
\end{figure}
\\~\\
Our goal is to compute an outer approximation of ${\mathcal X}$, which we denote  by $\overline {\mathcal X}$. Because ${\mathcal X} \subseteq \overline {\mathcal X}$, the superset $\overline {\mathcal X}$ delimits all possible target positions that explain the given measurements, under the two assumptions on the data model. Importantly,  the approximation should be   tractable to compute and as tight as possible. We let $\overline {\mathcal X}$ have the shape of a rectangle, \begin{equation} \overline {\mathcal X} = \left\{ x \in {\mathbf R}^d \colon\underline{\beta}   \leq x \leq \overline{\beta} \right\}, \label{eq:rect} \end{equation} where $\underline{\beta}, \overline{\beta} \in {\mathbf R}^d$ are vectors to be determined. The inequalities inside the braces mean that $\underline{\beta}_i \leq x_i \leq \overline{\beta}_i$ for $1 \leq i \leq d$, with $\underline{\beta}_i$, $x_i$, and $\overline{\beta}_i$ being the $i$th component of $\underline{\beta}$, $x$, and $\overline{\beta}$, respectively. The rectangular form is adopted for simplicity only; the proposed approach extends readily to any other desired form of polyhedron -- \textcolor{black}{see Section~\ref{sec:extensions}}. A rectangle is interesting in practice because it allows to delimit a useful perimeter of possible target positions, say, for search-and-rescue operations.
\\~\\
\textcolor{black}{ The work of Eldar et al.~\cite{eldar2008minimax} also considers the problem of approximating the set of all estimates given bounded error, under some assumptions about the parameter/vector to be estimated. There are two main difference between our work and~\cite{eldar2008minimax}. First, our observation model in~\eqref{eq:model} is non-linear, while~\cite{eldar2008minimax} only considers linear models. Second,  we approximate the set of estimates by a rectangle or, more generally, a polyhedron -- see Section~\ref{sec:extensions}. Eldar et al.~\cite{eldar2008minimax} compute a ball with small radius by approximating the Chebyshev center of the set of all estimates. The non-linear model~\eqref{eq:model} prevents us from using the results of~\cite{eldar2008minimax} since, in our case, the set of all estimates $\mathcal{X}$ is generally  non-convex -- see figure~\ref{fig:worst_average_best}. Eldar et al.~\cite{eldar2008minimax} consider a convex set of estimates given by a finite intersection of ellipsoids.}
\subsection{Contributions}
\label{sec:contributions}
	A preliminary version of this work also used linear fractional representations to approach the problem of target localization~\cite{first_LFR_paper}. This work expands comprehensively the early version along four main directions:
	\begin{enumerate}
		\item Scalability: In~\cite{first_LFR_paper} our LFR modelling leads to a flattening map (explained ahead) $L_{{\mathcal U}_{[2M+M^2]}}$ that has an input dimension
		 $\mathcal{O}(M^3)$ and output dimension $\mathcal{O}(M^2)$,  with $M$ denoting the number of anchors. In this paper we achieve a
		flattening map $L_{{\mathcal U}_{[2M]}}$ with both lower input $\mathcal{O}(M^2)$ and output $\mathcal{O}(M)$ dimensions. 
		This decrease in dimensions is key for scalability  since, for a fixed dimension $d$, our approach solves $2d$ semidefinite programs each with $\mathcal{O}(M^2)$  variables (input dimension of $L_{{\mathcal U}_{[2M]}}$) and a linear matrix inequality (LMI) in\footnote{For an arbitrary $d$, $\mathbf{S}^{d}$ denotes the set of $d \times d$ symmetric matrices. } $\mathbf{S}^{2M}$ (output dimension of $L_{{\mathcal U}_{[2M]}}$). The modeling approach of~\cite{first_LFR_paper} leads to larger semidefinite programs with $\mathcal{O}(M^3)$ variables and an LMI of dimension $\mathcal{O}(M^2)$.
		This effective decrease in complexity is achieved by a careful manipulation of LFR modelling tools (detailed in Appendix~\ref{ap:step1}).
			\item Non-negativeness assumption: Unlike~\cite{first_LFR_paper}, in this paper we introduce set $\mathcal{X}_2$ which explicitly copes with the assumption that the measurements generated according to the data model~\eqref{eq:model} are non-negative.
		\item \textcolor{black}{Robustness interpretation: We give a robustness interpretation of the problem in terms of a frequentist formalism. Under mild assumptions, we show that the set of target positions $\mathcal{X}$ contains all Maximum Likelihood (ML) estimates of the target position $x^*$ regardless of any feasible noise density $f_{\Delta}$. Our main assumptions on $f_{\Delta}$ mirror the assumptions described in section~\ref{sec:introduction}:  $f_{\Delta}$  is supported on the ellipsoid $\mathcal{E}(0,\Sigma)$ and $f_{\Delta}$ yields non-negative measurements (almost surely). So, in simple terms, $\mathcal{X}$ majorizes the set of point estimates that are \textit{plausible} under a frequentist formalism and that respect the assumptions of the problem. Furthermore, this majorization is actually tight: any point estimate $\hat{x}\in \mathcal{X}$ is a \textit{plausible} estimate of the $x^*$, in the sense that there exists a noise density $f_{\Delta}$ such that $\hat{x}$ is an ML estimate under $f_{\Delta}$ and $f_{\Delta}$ respects the assumptions of the problem.}
		\item  \textcolor{black}{Numerical validation: We give extensive numerical experiments to validate our approach. Namely, we compare our method with a benchmark convex relaxation and verify that our  method tends to outperform the benchmark method.
		In average, our localization approach doubles the accuracy the benchmark method   when the amount of measurement noise is low.}
	\end{enumerate}
\subsection{Paper Organization}
 In section~\ref{sec:probstatement} we formalize the problem of computing outer approximations of the set $\mathcal{X}$ of possible target positions.  Section~\ref{sec:backgroundLFR} provides some background on Linear Fractional Representations (LFRs). Section~\ref{sec:our_approach} details our approach for computing an outer approximation of $\mathcal{X}$ via LFRs. In section~\ref{sec:benchmark} we outline a benchmark convex relaxation. Section~\ref{sec:stats} provides a statistical interpretation of set $\mathcal{X}$ in terms of a Frequentist formalism when the noise densities are unknown. \textcolor{black}{Section~\ref{sec:numerical_results} provides numerical evidence that, on average, our method outperforms a standard benchmark relaxation. Section~\ref{sec:extensions} shows how to extend the proposed approach to compute a general polyhedral approximation of $\mathcal{X}$. } Section~\ref{sec:conclusion} concludes the paper.
\section{The core Problem}
\label{sec:probstatement}

Our goal is to compute the components of the vectors $\underline{\beta}, \overline{\beta} \in {\mathbf R}^d$ that define the corners of the outer rectangle $ \overline {\mathcal X}$, see~\eqref{eq:rect}.

Consider the first component of $ \overline{\beta}$, component $ \overline{\beta}_1$. The best (tigthest) $ \overline{\beta}_1$ is $\sup\{ s^T x \colon x \in {\mathcal X} \}$, where $s = (1, 0, \ldots, 0 )\in \mathbf{R}^d$, because the supremum reveals how much ${\mathcal X}$ stretches in direction $(1, 0, \ldots, 0)$. This value is given by
\begin{equation} \begin{array}[t]{ll} \underset{x}{\text{maximize}} & s^T x \\ \text{subject to} & x \in {\mathcal X}.
\end{array} 
\label{eq:core} \end{equation}
Computing the remaining components of $\overline{\beta}$ amounts to solving~\eqref{eq:core} again, just with a different choice of $s$ for each component. In general, computing the best (tightest) $ \overline{\beta}_i$ is done by taking $s = (0, \ldots, 1, \ldots, 0)$ (all components of $s\in \mathbf{R}^d$ are equal to $0$ except for the $i$-th component which is equal to $1$).

Computing $ \underline{\beta}$ amounts to solving problems of the form~\eqref{eq:core}, too. The best  $ \underline{\beta}_i$ is found by first computing the maximum of~\eqref{eq:core} with $s = -(0, \ldots, 1, \ldots, 0)$ and then flipping  its sign. To conclude, we need to solve $2d$ problems of the form~\eqref{eq:core}. 

We now focus on problem~\eqref{eq:core}, for an arbitrary  $s \in {\mathbf R}^d$. 
Thus, our core problem  is to compute the optimal value of 
\begin{equation} \begin{array}[t]{ll} \underset{x, u}{\text{maximize}} & s^T x \\ \text{subject to} & y_m = \left\| x - r_m \right\| + u_m \\
& \left\| x - r_m \right\|^2 \geq \Sigma_{mm} \\ & u^T \Sigma^{-1} u \leq 1.
\end{array} 
\label{eq:core1}
\end{equation}
Because this problem is nonconvex, we propose a tractable, convex relaxation. We compute an upper-bound on the optimal value of~\eqref{eq:core1}, thereby still obtaining  a valid outer rectangle $\overline {\mathcal X}$ for ${\mathcal X}$, but maybe not the tightest one. To generate the convex relaxation, we use a technique from robust control known as linear-fractional representations (for example see ~\cite{packard1993control},~\cite{el1996control},~\cite{zhou1998essentials},~\cite{calafiore2004ellipsoidal}).

\section{\textcolor{black}{Robustness Interpretation of Set $\mathcal{X}$}}
\label{sec:stats}
\textcolor{black}{This section builds on the motivation of section~\ref{sec:introduction} and figure~\ref{fig:motivating_plot} by giving a formal statistical interpretation of $\mathcal{X}$. In loose terms, this section shows that  $\mathcal{X}$ represents the set of estimates that are optimal, under a Maximum-likelihood (ML) criteria,for noise distributions respecting the assumptions of the problem. So, any ML estimate $\hat{x}$ lies in $\mathcal{X}$, and each point $\hat{x}\in \mathcal{X}$ is optimal under the ML criteria for some feasible noise distribution. A consequence of this interpretation is that, essentially, $\mathcal{X}$ jointly considers \textit{all} statistical estimates of the target $x$. By delimiting $\mathcal{X}$ we are tracking the set of all estimates $\hat{x}$ that can arise from a ML criteria, regardless of the underlying noise distribution respecting the assumptions of the problem.}
\subsection{\textcolor{black}{Maximum-Likelihood Estimation}}
\textcolor{black}{Let us consider the set of possible target positions}
\begin{align}
\mathcal{X}:=\{ x:\enspace y-\theta(x;R)\in \Phi,\,\, \theta(x;R)+\Phi\subseteq \mathbf{R}_+^M   \} 
\label{eqn:non_convex_set}
\end{align}
(assumed non-empty) with $\Phi\subseteq \mathbf{R}^M$ an arbitrary uncertainty region for the measurements $y$ and $\theta(x;R)$ the mapping that concatenates the distance from target $x$ to each anchor $r_m$,
\begin{align}
\theta(x;R):= \{ ||x-r_m||  \}_{m=1}^M,\enspace R=(r_1,\dots,r_M).
\label{eq:true_measurements}
\end{align}
Definition~\eqref{eq:defX} works with an ellipsoidal uncertainty region $\Phi = \mathcal{E}(0,\Sigma)$ but the forthcoming interpretation holds for general closed, convex uncertainty sets with a non-empty interior\footnote{The interior of a set $\Phi\subseteq \mathbf{R}^M$, denoted as $\text{int } \Phi$, is defined as the set of interior points so $\text{int }  \Phi=\{x:  \exists\,\, \epsilon>0,\enspace B(x,\epsilon)\subseteq \Phi\}$ with $ B(x,\epsilon)$ a open ball in $\mathbf{R}^M$ so $B(x,\epsilon)=\{v: ||x-v||<\epsilon\}$.} $\text{int } \Phi$. The interpretation follows the framework of robust estimation, where $x$ is to be estimated with the underlying noise distribution unknown. Our interpretation shows that, under mild assumptions, set $\mathcal{X}$ is equal to the set of {reasonable} estimates of target position $x$ from measurements $y$, given that the measurements come from a \textit{coherent} probabilistic model defined over the uncertainty region $\Phi\subseteq \mathbf{R}^M$ but with an arbitrary noise density.
\\~\\
Consider a frequentist estimation problem where we want to estimate the position $x$ of a target from measurements
\begin{align}
Y(x)=\theta(x;R)+\Delta,
\label{eqn:freq_model}
\end{align}
where $\Delta$ is a random noise vector in $\mathbf{R}^M$ that has density $f_{\Delta}$ and support\footnote{The support~\cite{billingsley1995probability} (page 181) of a random vector $X$ in  $\mathbf{R}^p$, denoted as $S_X$, is defined as the minimal closed set that supports $X$. So, if $C$ is an arbitrary closed set, then $C$ supports $X$ (that is $\mathbb{P}(X\in C)=1$) if and only if $S_X\subseteq C$. The symbol $\mathbb{P}$ denotes a probability measure~\cite{billingsley1995probability}.} $S_\Delta$. The notation $Y(x)$ indicates that the measurements depend on the position $x$ of the target.
Given this setup, a {popular} estimator is the Maximum-Likelihood (ML) estimator
\begin{align}
\hat{x} \in \arg \max_x f_{\Delta}(y-\theta(x;R)).
\label{eqn:ML_estimation}
\end{align}
The ML estimate however has the drawback that is may be {unrealistic} in the sense that it could render negative observations with non zero probability. So even if the support set $\Phi$ is bounded we can have $\mathbb{P}(\exists_m :  Y_m(\hat{x})< 0)>0,$ with $Y_m(\hat{x})$ the $m$-th component of the observation vector resulting from~\eqref{eqn:freq_model} with $x=\hat{x}$. This event is {unrealistic} because we can observe only non-negative measurements; so our estimate should display this property we know to be true. To highlight this feature we present the notion of \textit{coherent} estimates $\hat{x}$.
\begin{definition}
	\label{def:coherent}
	An arbitrary vector $\hat{x}\in \mathbf{R}^d$ is {coherent} if the data model~\eqref{eqn:ML_estimation} with estimate $x=\hat{x}$ yields non-negative observations $Y(\hat{x})$ with probability one, that is
	\begin{align}
	\mathbb{P}(Y(\hat{x})\geq 0)=1.
	\end{align} 
\end{definition}
The problem of computing coherent ML estimators is
\begin{align}
\begin{array}[t]{ll}
\underset{x}{\text{maximize}}
& f_{\Delta}(y-\theta(x;R)) \\
\text{subject to} & \mathbb{P}(Y(x)\geq 0)=1.
\end{array} 
\label{eqn:ML_estimation_relistic}
\end{align}
Our first result is that the constraint $\mathbb{P}(Y(x)\geq 0)=1$ is tractable in the sense that it can be expressed as a set inequality involving target position $x$ and the support set $S_\Delta$.
\begin{lemma}(Rewriting the ML Constraint)
	\label{lemma:rewriting_ML}
	Let $Y(x)$ be a random vector given by model~\eqref{eqn:freq_model} with $x$ a fixed vector. Then
	\begin{align*}
	\mathbb{P}(Y(x)\geq 0)=1 \enspace \Leftrightarrow \enspace \theta(x;R)+S_\Delta\subseteq \mathbf{R}_+^M.
	\end{align*}
\end{lemma}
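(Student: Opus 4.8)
The plan is to convert the probabilistic constraint into a purely deterministic set inclusion, leaning entirely on the support characterization recalled in the footnote: for a closed set $C \subseteq \mathbf{R}^M$, one has $\mathbb{P}(\Delta \in C) = 1$ if and only if $S_\Delta \subseteq C$. First I would fix the target $x$ and abbreviate $\theta := \theta(x;R)$, which is then a constant vector, so that the model reads $Y(x) = \theta + \Delta$. The whole argument reduces to identifying the correct closed set $C$ and then chasing a single translation.

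The key step is to rewrite the event $\{ Y(x) \geq 0 \}$ as an event about $\Delta$ alone. Reading the inequality componentwise, $Y(x) \geq 0$ holds exactly when $\theta_m + \Delta_m \geq 0$ for every $m$, i.e.\ when $\Delta \in C$ with
\begin{align*}
C := \{ v \in \mathbf{R}^M : v \geq -\theta \} = -\theta + \mathbf{R}_+^M.
\end{align*}
This is an equality of events, so $\mathbb{P}(Y(x) \geq 0) = \mathbb{P}(\Delta \in C)$ before any appeal to the support is made. Since $\mathbf{R}_+^M$ is closed and translation is a homeomorphism, $C$ is closed, so the support characterization applies verbatim and gives $\mathbb{P}(\Delta \in C) = 1 \Leftrightarrow S_\Delta \subseteq C$. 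Hence $\mathbb{P}(Y(x) \geq 0) = 1$ if and only if $S_\Delta \subseteq -\theta + \mathbf{R}_+^M$.

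It remains to translate this inclusion back. Adding $\theta$ to both sides, $S_\Delta \subseteq -\theta + \mathbf{R}_+^M$ is equivalent to $\theta + S_\Delta \subseteq \mathbf{R}_+^M$, which is exactly the claimed right-hand side once $\theta$ is restored to $\theta(x;R)$. I do not expect a genuine obstacle here: the support definition supplied in the footnote does all the measure-theoretic work, and the only points requiring care are verifying that $C$ is closed (so the characterization is applicable without qualification) and keeping the direction of the translation straight. The mildest subtlety worth flagging explicitly in the final write-up is that $S_\Delta$ is the \emph{minimal} closed set supporting $\Delta$, which is precisely what makes the ``only if'' direction work, since a strictly smaller support could fail to be contained in $C$ even when $\mathbb{P}(\Delta \in C) = 1$.
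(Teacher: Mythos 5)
Your proof is correct, but it takes a genuinely cleaner route than the paper's. You identify the event $\{Y(x)\geq 0\}$ with $\{\Delta\in C\}$ for the single closed set $C=-\theta(x;R)+\mathbf{R}_+^M$ and then invoke, in one shot, the equivalence stated in the paper's footnote: for closed $C$, $\mathbb{P}(\Delta\in C)=1$ if and only if $S_\Delta\subseteq C$. The paper instead proves the two implications separately with more elementary tools. For sufficiency it only uses $\mathbb{P}(\Delta\in S_\Delta)=1$, intersecting the event $\{Y(x)\geq 0\}$ with $\{\Delta\in S_\Delta\}$. For necessity it argues by contraposition: given some $\delta^*\in S_\Delta$ with $\theta_m(x;R)+\delta_m^*<0$, it constructs an explicit open ball $B(\delta^*,\epsilon^*)$ disjoint from $[-\theta(x;R),+\infty)$ and uses the local property that balls centered at support points carry positive probability, concluding $\mathbb{P}(Y(x)\geq 0)<1$ via an inclusion--exclusion computation. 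In effect, the paper re-derives, in this special case, the minimality half of the characterization that you take as a black box, while you obtain both directions at once from the stated equivalence. Your argument is shorter and less error-prone; the paper's is more self-contained, needing only the local positive-mass property of the support rather than the full closed-set equivalence. Both hinge on the same underlying facts about $S_\Delta$, and both need closedness of the translated orthant --- you verify it explicitly, whereas the paper exploits it implicitly when it builds the open ball disjoint from $[-\theta(x;R),+\infty)$.
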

\begin{proof}
	Fix $x$ and assume that $\theta(x;R)+S_\Delta\subseteq \mathbf{R}_+^M$ holds. Using $\mathbb{P}(\Delta \in S_\Delta)=1$ let us compute the desired probability
	\begin{align*}
	\mathbb{P}(Y(x)\geq 0 )&= \mathbb{P}(Y(x)\geq 0 \cap \Delta \in S_\Delta )\\
	&=\mathbb{P}(\theta(x;R) + \Delta\geq 0 \cap \Delta \in S_\Delta )\\
	&=1.
	\end{align*}
	The last equality uses $\theta(x;R)+S_\Delta\subseteq \mathbf{R}_+^M$. To prove the reverse implication assume that there exists a $\delta^*\in S_\Delta$ such that $\theta_m(x;R)+\delta^*_m<0$ for some $m=1,\dots,M$.  Given $\delta^*$ and $x$ define a radius $\epsilon^*=0.5\big(-\delta^*_m-\theta_m(x;R)\big)>0$. Then
	\begin{align}
	B(\delta^*,\epsilon^*) \cap [-\theta(x;R),+\infty)=\emptyset
	\label{eqn:small_perturb}
	\end{align} 
	with $	B(\delta^*,\epsilon^*)=\{x:  ||x-\delta^*||<\epsilon^*\}$ a (open) Euclidean ball and $[-\theta(x;R),+\infty)=\{v: v_m\geq -\theta_m(x;R) \}$ a multi-dimensional interval. Using~\eqref{eqn:small_perturb} we have
	\begin{align*}
	\mathbb{P}(Y(x)\geq 0 )&= \mathbb{P}(\theta(x;R) + \Delta\geq 0 )\\
	&=\mathbb{P}(\theta(x;R) + \Delta\geq 0 \cup \Delta \in B(\delta^*,\epsilon^*) )\\
	&\enspace \,\, +\mathbb{P}(\theta(x;R) + \Delta\geq 0 \cap \Delta \in B(\delta^*,\epsilon^*))\\
	&\enspace \,\, -\mathbb{P}( \Delta \in B(\delta^*,\epsilon^*))
	\\
	&=\mathbb{P}(\theta(x;R) + \Delta\geq 0 \cup \Delta \in B(\delta^*,\epsilon^*))\\
	&\enspace \,\, -\mathbb{P}( \Delta \in B(\delta^*,\epsilon^*))\\
	&<1.
	\end{align*}
	The third equality is a direct consequence of~\eqref{eqn:small_perturb}. The final inequality uses $\mathbb{P}( \Delta \in B(\delta^*,\epsilon^*))>0$  which follows from an equivalent characterization of $S_\Delta$~\cite{billingsley1995probability} (page 181).
\end{proof}
By Lemma~\ref{lemma:rewriting_ML} we can rewrite problem~\eqref{eqn:ML_estimation_relistic} as
\begin{align}
\begin{array}[t]{ll}
\underset{x}{\text{maximize}}
& f_{\Delta}(y-\theta(x;R)) \\
\text{subject to} & \theta(x;R)+S_\Delta\subseteq \mathbf{R}_+^M
\end{array} 
\label{eqn:ML_estimation_relistic_2}
\end{align}
 Using formulation~\eqref{eqn:ML_estimation_relistic_2} we can already give an interpretation for set $\mathcal{X}$. Assume that the uncertainty region $\Phi$ is bounded and let $f_{\Delta}$ denote the uniform\footnote{The symbol $\mathbf{1}_{\Phi}(\phi)$ in the integral~\eqref{eqn:ML_estimation_relistic_2}  denotes the indicator function of set $\Phi$, that is, $\mathbf{1}_{\Phi}(\phi)=1$ when $\phi\in \Phi$ and $\mathbf{1}_{\Phi}(\phi)=0$ otherwise.} density over $\Phi$
\begin{align}
f_{\Delta}(\delta)=\begin{cases} 0 &\mbox{if } \delta \notin \Phi \\
\frac{1}{\int \mathbf{1}_{\Phi}(\phi)  d\phi} & \mbox{if }\delta \in \Phi. \end{cases}
\label{eqn:uniform_noise}
\end{align}
Then set $\mathcal{X}$ is {precisely} the set of coherent ML estimators so
\begin{align*}
\begin{array}[t]{ll}
\underset{x}{\arg \max}
& f_{\Delta}(y-\theta(x;R)) \\
\text{subject to} & \theta(x;R)+\Phi\subseteq \mathbf{R}_+^M
\end{array} =\mathcal{X}.
\end{align*}
So, for uniform noise, $\mathcal{X}$ is equal to the set of coherent ML estimators. It turns out that set $\mathcal{X}$ also admits an \textit{additional} representation when the underlying noise density is actually unknown. This is the focus of the next section.
\subsection{\textcolor{black}{Estimation with Minimal Noise Knowledge}}
\textcolor{black}{Assume now we are interested in solving problem~\eqref{eqn:ML_estimation_relistic_2}, but we have only partial knowledge on the noise vector $\Delta$. In concrete, we assume that only the support of $\Delta$ is known, that is, we know that the noise is supported on a uncertainty region $\Phi$, so $S_\Delta=\Phi$. This assumption is, essentially, the minimal knowledge we can assume for the noise vector $\Delta$ since the actual noise density $f_{\Delta}$ is completely arbitrary. For example, if the support $\Phi$ is an ellipsoid $\mathcal{E}(0,\Sigma)$, then we are only assuming that, with probability one, any noise realization comes from $\mathcal{E}(0,\Sigma)$. We are not assuming that the noise realizations are more likely to come from any particular region of  $\mathcal{E}(0,\Sigma)$ since the actual noise density $f_{\Delta}$ is arbitrary. Assuming that the noise density $f_{\Delta}$ is unknown is a main focus of the robust statistics field~\cite{huber2004robust}.}
\\~\\
 Given $\Phi$, assume that the density $f_{\Delta}$ is unknown but we know that it belongs to a family of densities $\mathcal{F}$. By imposing some natural restrictions on family $\mathcal{F}$, we show that set  $\mathcal{X}$ is also equal to the set of coherent ML estimators, given that the noise density belongs to family $\mathcal{F}$. Let $\mathcal{D}_M$ denote the set of densities in $\mathbf{R}^M$, 
\begin{align*}
\mathcal{D}_M:=\Big\{ & f:\int_{\mathbf{R}^M} f(t) dt=1, f(t)\geq 0 \enspace \forall t\in \mathbf{R}^M \Big\}.
\end{align*}
\begin{theorem}(Frequentist Robustness of $\mathcal{X}$)
	\label{theorem:freq_robust_X}
	Let $\Phi$ denote a closed, convex uncertainty region with a non-empty interior. Let $\mathcal{F}$ denote the set of densities $f_{\Delta}$ that are positive only inside the support $\Phi$ and such that the ML objective $f_{\Delta}(y-\theta(x;R))$ is not identically equal to zero for $\theta(x;R)+\Phi\subseteq \mathbf{R}_+^M $,
	\begin{align*}
	\mathcal{F}:=\Big\{ &f_{\Delta}:\delta\notin \Phi \Rightarrow f_{\Delta}(\delta)=0,\enspace f_{\Delta}\in \mathcal{D}_M,\enspace S_\Delta=\Phi\\
	& \exists \, x^*: f_{\Delta}(y-\theta(x^*;R))>0,\, \theta(x^*;R)+\Phi\subseteq \mathbf{R}_+^M   \Big\}.
	\end{align*}
	Then  $\mathcal{X}$ is a tight majorizer of the set of \textit{coherent} ML estimators parametrized by the noise density $f_{\Delta}\in \mathcal{F}$, i.e.,
	\begin{align}
	\bigcup_{f_\Delta \in \mathcal{F}}
	\begin{array}[t]{ll}
	\underset{x}{\arg \max}
	& f_{\Delta}(y-\theta(x;R)) \\
	\text{subject to} & \theta(x;R)+\Phi\subseteq \mathbf{R}_+^M 
	\end{array} = \mathcal{X}.
	\label{eqn:majorization}
	\end{align}
\end{theorem}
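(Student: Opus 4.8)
The plan is to establish the set identity~\eqref{eqn:majorization} by proving the two inclusions separately. Throughout I write $\delta(x) := y - \theta(x;R)$ for the residual associated with a candidate position $x$, so that the ML objective is $f_\Delta(\delta(x))$ and membership $x \in \mathcal{X}$ reads $\delta(x) \in \Phi$ together with the coherence condition $\theta(x;R) + \Phi \subseteq \mathbf{R}_+^M$. Since $S_\Delta = \Phi$ for every $f_\Delta \in \mathcal{F}$, Lemma~\ref{lemma:rewriting_ML} lets me identify the probabilistic coherence constraint $\mathbb{P}(Y(x) \geq 0) = 1$ with the deterministic inclusion $\theta(x;R) + \Phi \subseteq \mathbf{R}_+^M$, so the feasible set of each inner program is exactly $\{x : \theta(x;R) + \Phi \subseteq \mathbf{R}_+^M\}$.

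For the inclusion $\subseteq$, I fix an arbitrary $f_\Delta \in \mathcal{F}$ and an arbitrary coherent ML maximizer $\hat{x}$ of the inner program, and show $\hat{x} \in \mathcal{X}$. The coherence of $\hat{x}$ already supplies the second defining condition of $\mathcal{X}$, so only $\delta(\hat{x}) \in \Phi$ remains. Here the defining clause of $\mathcal{F}$ guaranteeing an $x^*$ with $f_\Delta(\delta(x^*)) > 0$ and $x^*$ feasible is essential: it makes the optimal value strictly positive, whence $f_\Delta(\delta(\hat{x})) \geq f_\Delta(\delta(x^*)) > 0$. Because every $f_\Delta \in \mathcal{F}$ vanishes outside $\Phi$, positivity of $f_\Delta$ at $\delta(\hat{x})$ forces $\delta(\hat{x}) \in \Phi$, and the two conditions together give $\hat{x} \in \mathcal{X}$.

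For the inclusion $\supseteq$, I fix $\hat{x} \in \mathcal{X}$ and must exhibit a single density $f_\Delta \in \mathcal{F}$ making $\hat{x}$ a coherent ML estimator. Writing $\delta^* := \delta(\hat{x})$, membership in $\mathcal{X}$ gives $\delta^* \in \Phi$ and $\theta(\hat{x};R) + \Phi \subseteq \mathbf{R}_+^M$. The idea is to place a density on $\Phi$ whose unique global maximum sits at $\delta^*$; a convenient choice is the normalized truncation $f_\Delta(\delta) = g(\delta)\mathbf{1}_\Phi(\delta)/\int_\Phi g$ with $g(\delta) = \exp(-\|\delta - \delta^*\|^2)$. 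Since $g$ is integrable over $\mathbf{R}^M$ and strictly positive, this is a valid member of $\mathcal{D}_M$ that vanishes off $\Phi$; taking $x^* = \hat{x}$ verifies the last clause of $\mathcal{F}$, and $f_\Delta > 0$ throughout $\Phi$ yields $S_\Delta = \Phi$. With this $f_\Delta$, for any coherent $x$ I have $f_\Delta(\delta(x)) \leq g(\delta^*)/\int_\Phi g = f_\Delta(\delta^*)$, with equality for $x = \hat{x}$; hence $\hat{x}$ globally maximizes the inner program and lies in the union.

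I expect the reverse inclusion to be the main obstacle, and within it two points need care. First, certifying $S_\Delta = \Phi$ rather than merely $S_\Delta \subseteq \Phi$: this is where the hypotheses that $\Phi$ is closed and convex with nonempty interior enter, since they give $\Phi = \overline{\operatorname{int} \Phi}$, and $f_\Delta > 0$ on $\operatorname{int}\Phi$ then forces the (closed) support to contain all of $\Phi$. Second, I must argue global --- not merely local --- optimality of $\hat{x}$ over the full coherent feasible set; the strict unimodality of $g$ handles this cleanly because residuals falling outside $\Phi$ contribute objective value zero while residuals inside $\Phi$ are dominated by $\delta^*$. The forward inclusion is comparatively routine once the role of the nonvacuity clause in $\mathcal{F}$ is recognized.
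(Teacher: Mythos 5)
Your proof is correct and follows essentially the same route as the paper's: the forward inclusion uses the nonvacuity clause of $\mathcal{F}$ to force a strictly positive optimal value (you argue it directly, the paper by contradiction), and the reverse inclusion constructs exactly the same truncated Gaussian centered at the residual $y-\theta(\hat{x};R)$. As a minor bonus, you sketch the argument for $S_\Delta=\Phi$ via $\Phi=\overline{\operatorname{int}\Phi}$, a step the paper states but omits for space.
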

\begin{proof} 
	(Sufficiency)
	Since $f_{\Delta}\in \mathcal{F}$ there exists a $x^*$ such that $f_{\Delta}(y-\theta(x^*;R))>0$ and $\theta(x^*;R)+\Phi\subseteq \mathbf{R}_+^M $. So $x^*$ is feasible for~\eqref{eqn:ML_estimation_relistic_2}. Assume that $x$ is a maximizer of~\eqref{eqn:ML_estimation_relistic_2} but $x\notin \mathcal{X}$. Since  $\theta(x;R)+\Phi\subseteq \mathbf{R}_+^M $ is must be that $y-\theta(x;R)\notin \Phi$. Using the first property of family $\mathcal{F}$  we get $f_{\Delta}(y-\theta(x;R))=0$. But this is impossible since $x$ maximizes~\eqref{eqn:ML_estimation_relistic_2} but $x^*$ is a feasible point which renders a larger objective, i.e., $\theta(x^*;R)+\Phi\subseteq \mathbf{R}_+^M $ and  $f_{\Delta}(y-\theta(x^*;R))>0=f_{\Delta}(y-\theta(x;R))$. So any maximizer $x$ must belong to $\mathcal{X}$ regardless of $f_{\Delta}\in \mathcal{F}$.
	~\\
	(Necessity)
	We must show that for every $\hat{x}\in \mathcal{X}$ there exists a $f_{\Delta}\in \mathcal{F}$ such that $\hat{x}$ is optimal for problem~\eqref{eqn:ML_estimation_relistic_2}. Take $f_{\Delta}$ has the standard truncated Gaussian density with mean $y-\theta(\hat{x};R)$,
	\begin{align}
	f_{\Delta}(\delta)=  \frac{\exp\left(-\frac{\|\delta-\{y-\theta(\hat{x};R)\}\|^2}{2}\right)}{\int_\Phi \exp\left(-\frac{\|\hat{\delta}-\{y-\theta(\hat{x};R)\}\|^2}{2}\right) d\hat{\delta} } \mathbf{1}_{\Phi}(\delta). \label{eq:truncated_gaussian}
	\end{align}
	The factor $\int_\Phi \exp\left(-\frac{\|\hat{\delta}-\{y-\theta(\hat{x};R)\}\|^2}{2}\right) d\hat{\delta}$ is strictly positive and finite since $\Phi$ has a non-empty interior and the gaussian density $\delta \mapsto \exp\left(-\frac{\|\delta\|^2}{2} \right) $ is strictly positive and continuous. Using~\eqref{eq:truncated_gaussian} and $\hat{x}\in \mathcal{X}$ we get that $\hat{x}$ solves~\eqref{eqn:ML_estimation_relistic_2}. Now the claim $S_{\Delta}=\Phi$ follows because  $\Phi$ is closed, convex and $\text{int } \Phi\neq \emptyset$ (we omit the proof due to space constraints) and the remaining conditions on density $f_{\Delta} $ are direct to verify. Note that 
	the former argument works if we truncate any continuous density $f$ that is strictly positive in $\mathbf{R}^M$ $(f>0)$ and attains its maximum at the desired point $y-\theta(\hat{x};R)$.
\end{proof}
\begin{remark}
	Simple examples show that the two new restrictions imposed on family $\mathcal{F}$ are actually necessary for equality~\eqref{eqn:majorization}. If one of those conditions fails then there exist noise densities $f_{\Delta}$  and support sets $\Phi$ such that the set of \textit{coherent} ML estimators is not contained in $\mathcal{X}$, that is, the left-hand side of~\eqref{eqn:majorization} becomes strictly smaller than the right-hand side. Note also that these conditions naturally generalize the uniform case: when $f_{\Delta}$ is given by~\eqref{eqn:uniform_noise} the first constraint on $\mathcal{F}$ imposes that the density $f_{\Delta}$ is not changed point-wise\footnote{By definition, densities are unique up to a almost everywhere equivalence. For example let $\delta^*\in \mathbf{R}^M$ denote a fixed vector. If $f_{\Delta}$ is a density for $\Delta$ then the function $\hat{f}_{\Delta}$ defined by $\hat{f}_{\Delta}:=f_{\Delta} \mathbf{1}_{\Phi\setminus\{\delta^*\}}$ is also a valid density for $\Delta$ since $\hat{f}_{\Delta}$ is non-negative and functions $f_{\Delta}$ and $\hat{f}_{\Delta}$ differ only in the singleton $\{\delta^*\}$ which has zero Lebesgue measure. In simple terms, the first condition of theorem~\ref{theorem:freq_robust_X} avoids these degenerate modifications of noise densities.} such that $f_{\Delta}(\delta)>0$ for $\delta\notin \Phi$; the condition $\exists \, x^*: f_{\Delta}(y-\theta(x^*;R))>0,\, \theta(x^*;R)+\Phi\subseteq \mathbf{R}_+^M  $ is simply imposing  that $\mathcal{X}$  is non-empty.
	~\\
	\textcolor{black}{In conclusion, the restrictions imposed on family $\mathcal{F}$ are unavoidable for equality~\eqref{eqn:majorization} and, essentially, they impose that the ML problem is not ill-posed and that the noise densities are not degenerate (see footnote six).}
\end{remark}

	\textcolor{black}{We now summarize the main finding of this section for the overall problem of target localization:
\begin{itemize}
	\item In estimation problems, we often have minimal knowledge on the noise vector $\Delta$. In concrete, the noise distribution $f_{\Delta}$ is often unknown but we can known the support  $S_{\Delta}$ of $\Delta$. In this case, any ML estimate $\hat{x}$ of $x$ lies in set $\mathcal{X}$, regardless of the underlying noise density $f_{\Delta}\in \mathcal{F}$. Furthermore, any point $\hat{x}\in \mathcal{X}$ is a plausible estimate of $x$ in the following sense: there exists a noise distribution $f_{\Delta}\in \mathcal{F}$ such that, if $f_{\Delta}$ is the true distribution of $\Delta$ then $\hat{x}$ is a Maximum-Likelihood estimate of $x$. So, set $\mathcal{X}$ is of primary interest in robust statistics since it is tracking a set of statistical estimators of target $x$ \textit{regardless} of the underlying noise density.
\end{itemize}
The next section provides some background on linear-fractional representations: a fundamental tool in deriving our relaxation.
}

\section{Background on Linear-Fractional Representations}
\label{sec:backgroundLFR}

For our purposes, a linear-fractional representation (LFR) is a map that transforms matrices to vectors, denoted by
$$U \in {\mathcal U}\,\, \mapsto \,\, \begin{bmatrix}
     \begin{array}{@{}c|c@{}}
  C & d \\
  \hline
  B & a
    \end{array}
  \end{bmatrix}_{[p]}(U),$$
 where  $p$ is a positive integer and
\begin{equation} 
\begin{bmatrix}
     \begin{array}{@{}c|c@{}}
  C & d \\
  \hline
  B & a
    \end{array}
  \end{bmatrix}_{[p]}(U) =  B (I_p \otimes U )\left( I_q - C ( I_p \otimes U) \right)^{-1} d + a. \label{eq:LFR} \end{equation}
Here, $I_n$ is the $n\times n$ identity matrix and $\otimes$ denotes the Kronecker-product of matrices, thus
 $$I_p \otimes  U = \begin{bmatrix} U \\ & \ddots \\ & & U \end{bmatrix} \text{ ($p$ copies)}.$$ In~\eqref{eq:LFR}, $q$ is the number of rows of $C$. The LFR is determined by the positive integer $p$ and by the matrices $B$ and $C$, and the vectors $a$ and $d$, and is assumed to be well-posed on its domain ${\mathcal U}$ (meaning that matrix $I - C (I_p \otimes U)$ is assumed to be invertible for each matrix $U$ in the set ${\mathcal U}$).

The image of the LFR is the set of vectors of the form  \begin{equation} \text{Im } \begin{bmatrix}
     \begin{array}{@{}c|c@{}}
  C & d \\
  \hline
  B & a
    \end{array}
  \end{bmatrix}_{[p]} = \left\{ \begin{bmatrix}
      \begin{array}{@{}c|c@{}}
  C & d \\
  \hline
  B & a
    \end{array}
  \end{bmatrix}_{[p]}(U) \colon U \in {\mathcal U}\right\}. \label{eq:imLFR} \end{equation}

\paragraph*{A simple example} Many maps can be phrased
as LFRs. To illustrate, consider the non-linear rational mapping $$u \in {\mathcal U} = [-1,1] \quad \mapsto \quad \begin{bmatrix} 1 - u^2 \\ 4/(2 + u ) \end{bmatrix}.$$
This map can be phrased as the LFR 
\begin{equation} u \in {\mathcal U} \quad \mapsto \quad  \begin{bmatrix}
     \begin{array}{@{}c|c@{}}
  C & d \\
  \hline
  B & a
    \end{array}
  \end{bmatrix}_{[p]}(u), \label{eq:simpleLFR} \end{equation}
with $p = 3$ and
\begin{align*}
 C = \begin{bmatrix} 0 & 1 & 0 \\ 0 & 0 & 0 \\ 0 & 0 & -\frac{1}{2}\end{bmatrix} \hspace{-0.1cm}, \, B = \begin{bmatrix} -1 & 0 & 0 \\ 0 & 0 & -1\end{bmatrix} \hspace{-0.1cm}, \, d = \begin{bmatrix} 0 \\ 1 \\ 1\end{bmatrix} \hspace{-0.1cm},\, a = \begin{bmatrix} 1 \\ 2\end{bmatrix}.
\end{align*}
\paragraph*{Re-parameterization of the image of an LFR} A main technique when dealing with LFRs is to re-parameterize their image. This technique takes the image of an LFR, which is parameterized in~\eqref{eq:imLFR} in terms of the variable  $U$, and re-parameterizes the image in terms of a new variable $v$. With this re-parameterization, the image of an LFR becomes easier to handle. To obtain the re-parameterization, first note that
\vspace{0.2cm}
~\\
{\small$\begin{aligned}
\text{Im} \begin{bmatrix}
     \begin{array}{@{}c|c@{}}
  C & d \\
  \hline
  B & a
    \end{array}
  \end{bmatrix}_{[p]}
  & =  \left\{  B (I_p \otimes U ) (I - C (I_p \otimes U) )^{-1} d  + a \colon U \in {\mathcal U} \right\} \nonumber  \\
   & = \begin{aligned}[t] \left\{ B v + a  \colon v \right. &=  (I_p \otimes U) w,  \\  w &=  (I - C (I_p \otimes U))^{-1} d, U \in {\mathcal U} \left. \right\}  \end{aligned} \nonumber \\ & =  \left\{ B v  + a \colon v = (I_p \otimes U) w,\, w = C v + d, \, U \in {\mathcal U} \right\}\label{eq:exLFR}
\end{aligned}$}
\vspace{0.05cm}
~\\
which expresses the image of the LFR in terms of an extended space with three variables: matrix $U$, and new vectors $v$, $w$.

The next step flattens this space  by removing $U$; we refer to this step as the \textit{flattening step}. This step assumes  that the pairs of vectors $(v, w)$ satisfying $v = (I_p \otimes U) w$ for some $U \in {\mathcal U}$ can be written as the inverse image of a positive semidefinite cone under a linear map  that acts on the outer product $$\begin{bmatrix} v \\ w \end{bmatrix} \begin{bmatrix} v \\ w \end{bmatrix}^T.$$
That is, the flattening step assumes that the set 
\begin{equation} \left\{ (v, w) \colon v = (I_p \otimes U) w, U \in {\mathcal U} \right\} \label{eq:fm1} \end{equation} can be written as \begin{equation}\left\{ (v, w) \colon L_{{\mathcal U}_{[p]}}\left( \begin{bmatrix} v \\ w \end{bmatrix} \begin{bmatrix} v \\ w \end{bmatrix}^T\right) \succeq 0 \right\}, \label{eq:fm2} \end{equation}
where $L_{{\mathcal U}_{[p]}}$ is a linear map from the set of symmetric matrices ${\mathbf S}^{{n_v + n_w}}$ to the set of symmetric matrices ${\mathbf S}^n$, with $n_v$ and $n_w$ being the size of $v$ and $w$, respectively, and $n$ depending on the particular LFR at hand;  the notation $X \succeq 0$ means that the symmetric matrix $X$ is positive semidefinite. The map $L_{{\mathcal U}_{[p]}}$, which we refer to as the flattening map,  depends on the positive integer $p$ and on the domain ${\mathcal U}$, and has to be worked out  from LFR to LFR. 

The flattening map $L_{{\mathcal U}_{[p]}}$ allows to rewrite the image set as 
{\small\begin{align}
\text{Im} \begin{bmatrix}
  \begin{array}{@{}c|c@{}}
  C & d \\
  \hline
  B & a
    \end{array}
  \end{bmatrix}_{[p]} & = 
\Big\{ B v + a \colon  w = C v + d,   L_{{\mathcal U}_{[p]}} \hspace{-0.05cm} \bigg( \begin{bmatrix} v \\ w \end{bmatrix} \begin{bmatrix} v \\ w \end{bmatrix}^T \hspace{-0.05cm}\bigg) \succeq 0 \hspace{-0.02cm} \Big\} \nonumber \\ & =  \left\{ B v + a  \colon  L_{{\mathcal U}_{[p]}}\left( P \begin{bmatrix} v \\ 1 \end{bmatrix} \begin{bmatrix} v \\ 1 \end{bmatrix}^T P^T \right) \succeq 0 \right\},\label{eq:exLFR2}
\end{align}}
where the last equality eliminates variable $w$ and defines  $$P = \begin{bmatrix} I & 0 \\ C & d \end{bmatrix}.$$

In~\eqref{eq:exLFR2}, the image of the LFR is now parametrized by $v$. 



\paragraph*{A simple example (cont.)} To illustrate how the re-parameterization plays out, let us return to the simple LFR in~\eqref{eq:simpleLFR}. To obtain the flattening map for this LFR, note that
\begin{eqnarray}
  \lefteqn{v = (I_3 \otimes u ) w, \text{ for some }u \in {\mathcal U} = [-1,1]} \nonumber \\ & \Leftrightarrow & v = u w, \text{ for some } -1 \leq u \leq 1 \nonumber \\ & \Leftrightarrow & v v^T \preceq w w^T. 
  \label{eqn:aux_example}
\end{eqnarray}
The flattening map is therefore $L_{{\mathcal U}_{[3]}} \colon {\mathbf S}^6 \rightarrow {\mathbf S}^3$, $${\mathcal L}_{{\mathcal U}_{[3]}}\left(\begin{bmatrix} S_{11} & S_{12} \\ S_{21} & S_{22} \end{bmatrix} \right)= S_{22} - S_{11}.$$


\section{Our approach}
\label{sec:our_approach}
Equipped with the toolset of LFRs, we now return to the core problem~\eqref{eq:core1}.
We do a sequence of  reformulations  to arrive at our convex relaxation. Our reformulations are such that the optimal value of problem~\eqref{eq:core1} remains the same up until step \textit{g}) when we drop an underlying (non-convex) rank constraint. Our outer approximation of the set $\mathcal{X}$ is denoted by $\overline {\mathcal X}_{\text{LFR}}$.

\paragraph{Reformulate with quadratics}
We start by rewriting~\eqref{eq:core1} with quadratic constraints,
\begin{equation}
\begin{array}[t]{ll} \underset{x, z, u}{\text{maximize}} & s^T x \\ \text{subject to} & \left\| x \right\|^2 - z = 0\\
& z - 2 r_m^T x + \left\| r_m \right\|^2 - \Sigma_{mm} \geq 0 \\
& y_m  - u_m \geq 0,\enspace u^T \Sigma^{-1} u \leq 1 \\ &  y_m^2  - \left\| r_m \right\|^2 - z  + 2 r_m^T x - 2 y_m u_m + u_m^2 =  0.
\end{array} 
\label{eq:core2}
\end{equation}
To obtain~\eqref{eq:core2}, we first swapped the data constraint $y_m - u_m = \left\| x - r_m \right\|$ for the equivalent  the pair of conditions $y_m - u_m \geq 0$ and $( y_m - u_m) ^2 = \left\| x - r_m \right\|^2$; then, we expanded the squares and introduced the new variable $z = \left\| x \right\|^2$.

\paragraph{Lift uncertain vector $u$ to uncertain matrix $U$} Next, we view the vector $u$ as the first column of a square matrix $U$, that is, $u = U e_1$, where $e_1$ is the first column of $I_M$,  \begin{equation} I_M = \begin{bmatrix} e_1 & e_2 & \cdots & e_M \end{bmatrix}. \label{eq:IM} \end{equation}
Accordingly, we write~\eqref{eq:core2} in terms of the variable $U$:
\begin{equation}
\begin{array}[t]{ll} \underset{x, z, U}{\text{maximize}} & s^T x \\ \text{subject to} & \left\| x \right\|^2 - z = 0\\
& z - 2 r_m^T x + \left\| r_m \right\|^2  - \Sigma_{mm} \geq 0 \\
& y_m  - (U e_1)_m \geq 0\\
&  y_m^2  - \left\| r_m \right\|^2   + 2 r_m^T x \\ & - z - 2 y_m (U e_1)_m  + (U e_1)_m^2 =  0 
 \\ & \left\| \Sigma^{-1/2} U \right\| \leq 1,
\end{array} 
\label{eq:core3}
\end{equation}
where $\left\| X \right\|$ is the spectral norm (maximum singular value) of matrix $X$. \textcolor{black}{ Problems~\eqref{eq:core2} and~\eqref{eq:core3} share the same optimal value for two reasons. First if the triplet $(x,z,U)$ is feasible in~\eqref{eq:core3}, then the assignment $(x,z,Ue_1)$ yields a feasible point in~\eqref{eq:core2} since the first column  of $U$ belongs to  $\mathcal{E}(0,\Sigma)$, that is
\begin{align*}
1\geq \left\| \Sigma^{-1/2} U\right\|^2= \max_{ ||w||=1  } w^T U^T \Sigma^{-1} U w \geq e_1^T U^T U e_1.
\end{align*}
Second, if the triplet $(x,z,u)$ is feasible in~\eqref{eq:core2} then consider the assignment $(x,z,\overline{U})$  with $\overline{U}$ retaining the first column of $U$, that is,  $\overline{U}=(Ue_1\enspace 0)$. Point  $(x,z,\overline{U})$  a feasible point in~\eqref{eq:core3} because the spectral norm $||.||$ is equal to maximum singular value, that is
\begin{align*}
||\Sigma^{-1/2} \overline{U}||&=\lambda_{\text{max}} \Big\{\overline{U}^T\Sigma^{-1} \overline{U}\Big\}\\
&=\lambda_{\text{max}} \Big\{ \begin{bmatrix}
e_1^T {U}  \\
0
\end{bmatrix} \Sigma^{-1} \begin{bmatrix}
{U} e_1 & 0
\end{bmatrix}\Big\} \\
&=\lambda_{\text{max}} \Big\{ \begin{bmatrix}
e_1^T {U} \Sigma^{-1} U e_1 & 0  \\
0 & 0
\end{bmatrix} \Big\}\\
&=e_1^T {U} \Sigma^{-1} U e_1\\
&\leq 1,
\end{align*}
with  $\lambda_{\text{max}}(A)$ denoting the maximum singular value of a symmetric matrix $A$.}
We switch from $u$ to $U$ because it makes the flattening map of the forthcoming LFR easier to compute.

\paragraph{Write last three constraints in terms of the image of a nonlinear map} By introducing the map $\phi_{x,z} \colon {\mathcal U} \rightarrow {\mathbf R}^{2M}$, \begin{equation} U \,\, \mapsto \,\, \begin{bmatrix} y_m  - (U e_1)_m \\  y_m^2  - \left\| r_m \right\|^2  + 2 r_m^T x  - z - 2 y_m (U e_1)_m + (U e_1)_m^2  \end{bmatrix}, \label{eq:defphi} \end{equation}
where \begin{equation}{\mathcal U} = \left\{ U \in {\mathbf R}^{M\times M} \colon \left\| \Sigma^{-1/2} U \right\| \leq 1 \right\}, \label{eq:setU} \end{equation} we can remove the variable $U$ from~\eqref{eq:core3} and interpret the last three constraints of~\eqref{eq:core3} as  restricting the image of the map~$\phi_{x,z}$, in fact, as saying that the image of $\phi_{x,z}$ must intersect ${\mathbf R}^M_+ \times \{ 0_M \}$, where $0_M = ( 0, \ldots, 0)$ is the $M$-dimensional vector with all entries equal to $0$:
\begin{equation}
\begin{array}[t]{ll} \underset{x, z}{\text{maximize}} & s^T x \\ \text{subject to} & \left\| x \right\|^2 - z = 0\\ 
& z - 2 r_m^T x + \left\| r_m \right\|^2 - \Sigma_{mm} \geq 0 \\ & \text{Im}\,\phi_{x,z} \cap {\mathbf R}^M_+ \times \{ 0_M \} \neq \emptyset.\end{array} 
\label{eq:core4}
\end{equation}
The remaining steps use LFR techniques to express the uncertainty matrix $U$ as a flattening inequality in terms of the target $x$ and its squared norm $z$. In short, our approach models measurement uncertainty \textit{implicitly} as a relation on $x$ and $z$.

\paragraph{Phrase the map $\phi_{x,z}$ as an LFR} We now express the map $\phi_{x,z}$ as an LFR. Specifically, we have
\begin{equation}\phi_{x,z}(U) = \begin{bmatrix}
  \begin{array}{@{}c|c@{}}
  C & d \\
  \hline
B_1 & a_1 \\ B_2 & a_2  
    \end{array}
  \end{bmatrix}_{[2M]}(U), \label{eq:phiLFR} \end{equation}
where
\begin{align}
 C &=  E \left( I_M \otimes \begin{bmatrix} 0 & 1 \\ 0 & 0  \end{bmatrix} \right) F & d &= E \left( 1_M \otimes \begin{bmatrix} 0 \\ 1 \end{bmatrix} \right) \label{eq:matLFR}\\ 
 B_1 & =  \left( I_M \otimes \begin{bmatrix} 0 & -1 \end{bmatrix} \right) F & a_1 & = y \label{eq:LFRrow1} \\
 B_2 & = \widehat{B}_2 F & a_2 & = q + 2 R^T x - z 1_M. \label{eq:LFRrow2}
\end{align}
Here, $$\widehat{B}_2= \begin{bmatrix} 1 -2y_1 \\ & \ddots \\ & & 1 -2y_M \end{bmatrix},$$
$R = \begin{bmatrix} r_1 & \cdots & r_M \end{bmatrix}$ is the matrix that displays the positions of the $M$ reference landmarks in its columns, and
 \begin{align}q = \begin{bmatrix} y_1^2 - \left\| r_1 \right\|^2 \\ \vdots \\ y_M^2 - \left\| r_M \right\|^2 \end{bmatrix}
 \label{eq:definition_q}
 \end{align}
 is an auxiliary vector.
Finally, we define $E = I_{2M} \otimes e_1$ and $$F = \begin{bmatrix} I_2 \otimes e_1^T \\ & I_2 \otimes e_2^T \\ & &  \ddots  \\ & & & I_2 \otimes e_M^T \end{bmatrix},$$
with $e_m$ the $m$th column of the identity matrix $I_M$ (see~\eqref{eq:IM}). 

To express the map $\phi_{x,z}$~\eqref{eq:defphi} as the LFR~\eqref{eq:phiLFR} we use simple properties of LFRs, as detailed in appendix~\ref{ap:step1}. The LFR in~\eqref{eq:phiLFR} is well defined for any matrix $U\in \mathbf{R}^{M\times M}$ since the matrix  $I - C (I_{2M} \otimes U)$ is invertible for any $U$ (see appendix~\ref{ap:step1}).

\paragraph{Re-parameterize the image of the LFR}
Our next step is to re-parameterize the image of the  LFR~\eqref{eq:phiLFR} by applying the technique of Section~\ref{sec:backgroundLFR}. The only non-obvious point is the computation of the flattening map, defined in~\eqref{eq:fm1} and~\eqref{eq:fm2}. 

For the LFR at hand, it turns out that 
\begin{equation} \left\{ (v, w) \colon v = ( I_{2M} \otimes  U ) w, U \in {\mathcal U} \right\}, \label{eq:fmap1} \end{equation} 
can be written as \begin{equation}\left\{ (v, w) \colon L_{{\mathcal U}_{[2M]}}\left( \begin{bmatrix} v \\ w \end{bmatrix} \begin{bmatrix} v \\ w \end{bmatrix}^T\right) \succeq 0 \right\}, \label{eq:fmap2} \end{equation}
where the flattening map $L_{{\mathcal U}_{[2M]}} \colon {\mathbf S}^{4 M^2} \rightarrow {\mathbf S}^{2M}$ is 
\begin{equation}
L_{{\mathcal U}_{[2M]}}\left( \begin{bmatrix} S_{11} & S_{12} \\ S_{21} & S_{22} \end{bmatrix} \right) = \sum_{m = 1}^M E_m S_{22} E_m^T - F_m S_{11} F_m^T,
  \label{eq:thefmap}
  \end{equation}
with $E_m = I_{2M} \otimes e_m^T$ and $F_m = I_{2M} \otimes e_m^T \Sigma^{-1/2}$. Each matrix $S_{ij}$ has size $2M^2 \times 2M^2$. 
The details of this step are given in Appendix~\ref{ap:step2}. Plugging the image of the LFR, re-parameterized as in~\eqref{eq:exLFR2}, into problem~\eqref{eq:core4} allows to derive the reformulation
\begin{equation}
\begin{array}[t]{ll} \underset{x, z, v}{\text{maximize}} & s^T x \\ \text{subject to} & \left\| x \right\|^2 - z = 0\\
& z - 2 r_m^T x + \left\| r_m \right\|^2 - \Sigma_{mm} \geq 0 \\ & 
B_1 v + y \geq 0,\enspace B_2 v + q + 2 R^T x - z 1_M  = 0 \\ & L_{{\mathcal U}_{[2M]}}\left(
 \begin{bmatrix} v \\ C v + d \end{bmatrix} \begin{bmatrix} v \\ C v + d \end{bmatrix}^T   \right) \succeq 0,
\end{array} 
\label{eq:core5}
\end{equation}
where $L_{{\mathcal U}_{[2M]}}$ is as~\eqref{eq:thefmap}, $C$ and $d$ are given by~\eqref{eq:matLFR}, $B_1$, $B_2$ come from~\eqref{eq:LFRrow1} --~\eqref{eq:LFRrow2}, $R = \begin{bmatrix} r_1 & \cdots & r_M \end{bmatrix}$ and $q$ equals~\eqref{eq:definition_q}.

\paragraph{Rewrite the last problem in terms of a matrix with rank $1$} We now rewrite~\eqref{eq:core5} in terms of the rank $1$ matrix
\begin{equation} X = \begin{bmatrix} x  \\ z  \\ v  \\ 1 \end{bmatrix}  \begin{bmatrix} x  \\ z  \\ v  \\ 1 \end{bmatrix}^T. \label{eq:tform} \end{equation}

The objective can be written as $s^T x = \text{Tr}\left( S X \right)$, where $$S = \begin{bmatrix} 0 & 0 & 0 & s/2 \\    & 0 & 0 & 0 \\ & &  0 & 0 & \\ & & & 0 \end{bmatrix}.\label{eqn:objective}$$
Here, $\text{Tr}$ denotes the trace of a matrix, and, to simplify notation, we display from now on only the upper-part of symmetric matrices (and also omit the size of the zero blocks). The first constraint $\left\| x \right\|^2 - z  =  0$ can be written as $\text{Tr}\left( K X \right) = 0$, where $$K = \begin{bmatrix} I_M & 0 & 0 & 0 \\ & 0 & 0 & -1/2 \\ & & 0 & 0 \\  & & & 0 \end{bmatrix}.$$
The second constraint $z - 2 r_m^T x + \left\| r_m \right\|^2 - \Sigma_{mm} \geq  0$ is a linear inequality that is equivalent to $\text{Tr}\left( L_m X \right) \geq 0$, where
$$L_m = \begin{bmatrix} 0 & 0 & 0 & -r_m \\ & 0 & 0 & 1/2 \\ &  & 0 & 0 & \\ & & & \left\| r_m \right\|^2 - \Sigma_{mm} \end{bmatrix}.$$
The third constraint is $B_1 v + y \geq 0$. This linear vectorial constraint is equivalent to $M$ scalar inequalities of the form $$g_m^T v + y_m \geq 0,$$ where $B_{1,m}^T$ is the $m$th row of $B_1$. Each such constraint is written as $\text{Tr}\left(G_m X \right) \geq 0$, where $G_m$ is the symmetric matrix
$$G_m = \begin{bmatrix} 0 & 0 & 0 & 0 \\ & 0 & 0 & 0 \\ & & 0 & B_{1,m} / 2 \\ & & & y_m \end{bmatrix}.$$ The fourth constraint is $$\underbrace{\begin{bmatrix} 2 R^T & -1_M &  B_2 & q \end{bmatrix}}_{H} \begin{bmatrix} x \\ z \\ v \\ 1 \end{bmatrix} = 0,$$
which corresponds to $M$ constraints $\text{Tr}\left( H_m X \right) = 0$, where $H_m = h_m h_m^T$ and $h_m^T$ is the $m$th row of matrix $H$. Finally, the last constraint is a linear matrix inequality (LMI) given by $$L_{{\mathcal U}_{[2M]}}\left( \begin{bmatrix} 0 & P \end{bmatrix} X \begin{bmatrix} 0 \\ P^T  \end{bmatrix} \right) \succeq 0.$$ In sum, problem~\eqref{eq:core5} corresponds to 
\begin{equation}
\begin{array}[t]{ll} \underset{X}{\text{maximize}} & \text{Tr}( S X ) \\ \text{subject to} & \text{Tr}( K X ) = 0,\enspace \text{Tr}( L_m X ) \geq 0 \\
& \text{Tr}( G_m X ) \geq 0,\enspace \text{Tr}( H_m X ) = 0 \\\
& L_{{\mathcal U}_{[2M]}}\left( \begin{bmatrix} 0 & P \end{bmatrix} X \begin{bmatrix} 0 \\ P^T  \end{bmatrix} \right) \succeq 0 \\
& X \succeq 0,\enspace f^T X f = 1,\enspace \text{rank}(X) = 1.
\end{array} 
\label{eq:core6}
\end{equation}
Here, $f$ is the vector with all components equal to $0$, except the last one, which is equal to $1$: $f = ( 0, \ldots, 0 , 1)$. Thus, $f^T X f$ gives the entry in the bottom right corner of $X$. The last three constraints in~\eqref{eq:core6} encode the set of rank one matrices with the bottom right entry equal to one, that is, matrices $X$ as in~\eqref{eq:tform}.

\paragraph{Drop the rank constraint} Removing the rank constraint in~\eqref{eq:core6} leaves a convex semidefinite program (SDP). That SDP is our convex relaxation for the core problem~\eqref{eq:core1}.

\section{Benchmark Approach}
\label{sec:benchmark}
 The benchmark approach~\cite{ZQ_luo} goes through an alternative sequence of steps that also preserve the optimal value of problem~\eqref{eq:core1} up until a  (different) rank constraint is also relaxed. The outer-approximation computed via the benchmark SDP relaxation is denoted by $\overline {\mathcal X}_{\text{SDP}}$.

\paragraph{Reformulate with quadratics} The first step is the same as our proposed approach and creates the reformulation~\eqref{eq:core2}.

\paragraph{Rewrite the last problem in terms of a matrix with rank~$1$}
the next step is to rewrite~\eqref{eq:core2} in terms of the matrix 
\begin{equation} X = \begin{bmatrix} x \\ z \\ u \\ 1 \end{bmatrix} \begin{bmatrix} x \\ z \\ u \\ 1 \end{bmatrix}^T. \label{eq:sa1} \end{equation}
Note that, here, matrix $X$ has dimension $d+2+M$ while, in our LFR relaxation, the underlying $X$ matrix has dimension $d+2+2M^2$. For simplicity, we use the letter $X$ to refer to both matrices. The underlying context will dictate which object is being mentioned. 
Problem~\eqref{eq:core2} is equivalent to
\begin{equation}
\begin{array}[t]{ll} \underset{X}{\text{maximize}} & \text{Tr}( \hat{S} X ) \\ \text{subject to} & \text{Tr}( \hat{K} X ) = 0,\enspace \text{Tr}( \hat{L}_m X ) \geq 0 \\
& \text{Tr}( \hat{G}_m X ) \geq 0 ,\enspace \text{Tr}( \hat{H}_m X ) = 0,\enspace  \text{Tr}( \hat{J} X ) \geq 0 \\
& X \succeq 0 ,\enspace \hat{f}^T X \hat{f} = 1,\enspace \text{rank}(X) = 1.
\end{array} 
\label{eq:coress}
\end{equation}
where matrices $ \hat{S},\hat{K},\hat{L}_m,\hat{G}_m,\hat{H}_m,\hat{J},\hat{f}$ are obtained by proceeding as in Section~\ref{sec:our_approach} step \textit{f}), that is, rewriting the objective and constraints of~\eqref{eq:core2} in terms of matrix $X$ composed with linear mappings. We omit these derivations due to space constraints, but the  matrices are given in Appendix~\ref{ap:aux_matrices}.

\paragraph{Drop the rank constraint} Dropping the rank constraint in~\eqref{eq:coress} produces a standard relaxation for problem~\eqref{eq:core1}.

\section{Numerical Results}
\label{sec:numerical_results}
\label{sec:random_target}
We compare the rectangle obtained\footnote{All experiments were developed using the package CVX with MATLAB$^\circledR$ \cite{cvx}, \cite{gb08}. In particular we have
	used version 4.0 of the solver SDPT3
	in a computer with a Intel(R) Core(TM) i7-3630QM CPU @ 2.4GHz processor.} by our approach,
$\overline {\mathcal X}_{\text{LFR}}$, with the rectangle obtained by the standard SDP relaxation, $\overline {\mathcal X}_{\text{SDP}}$, in a two dimensional localization scenario, that is $d=2$, with $M=3$ reference landmarks. Thus,
$\overline {\mathcal X}_{\text{LFR}}$ is obtained by solving four problems of the
form~\eqref{eq:core6} (without the rank constraint) for  $s\in \{ (0,1),(1,0),(0,-1),(-1,0)  \}$. Set  $\overline {\mathcal X}_{\text{SDP}}$ is obtained via~\eqref{eq:coress} (without the rank constraint).
\vspace*{0.2ex}
\\~\\
\noindent\textbf{Simulated setup.} In order to quantify the amount of noise in measurement vector $y$ assume that, given the true target position $x^*$, any noise vector $u\in \mathcal{E}(0,\Sigma)$ changes the true distances $||x^*-r_i||$ by at most $\alpha\%$, that is, the noisy measurements $y_i=||x^*-r_i||+u_i$ are contained in a symmetric interval centered at the true measurement $||x^*-r_i||$ and of length $2 \alpha||x^*-r_i||$, regardless of $u\in \mathcal{E}(0,\Sigma)$. In compact notation:
	\begin{equation}
	\mathcal{E}(\theta(x^*;R),\Sigma) \subseteq \Big[ (1-\alpha) \theta(x^*;R) , (1+\alpha) \theta(x^*;R)  \Big].
	\label{eqn:noise_level_ellip}
	\end{equation}
with $\theta(x^*;R)$ denoting the vector of true distance measures defined in~\eqref{eq:true_measurements}. In this setup, scalar $\alpha\in (0,1)$ quantifies the amount of measurement noise since the measured distances $y_i$ lie in the interval $\big[(1-\alpha)||x^*-r_i||,(1+\alpha)||x^*-r_i||\big]$ which is increasing in size with $\alpha$. 

To compare both methods we generate $100$ localization instances as follows:
\begin{itemize}
	\item We generate $100$ positions for the target $x^*$ and anchors $(r_1,r_2,r_3)$  by sampling uniformly over $[ -1, 1]^2$.
	\item We generate a grid of thirty regularly spaced $\alpha$ points in the interval $[0.05,0.95]$. For each target $x^*$, anchors $(r_1,r_2,r_3)$ and noise level $\alpha$, a random positive definite matrix $\Sigma$ is generated and scaled (see Figure~\ref{figure:experimental_setup_plot}) in order
	to ensure that~\eqref{eqn:noise_level_ellip} holds, that is, any uncertainty vector $u$ changes the true distances $||x^*-r_i||$ by at most $\alpha\%$.
					\begin{figure}[h]
		\begin{center}
			\includegraphics[width=7.5cm]{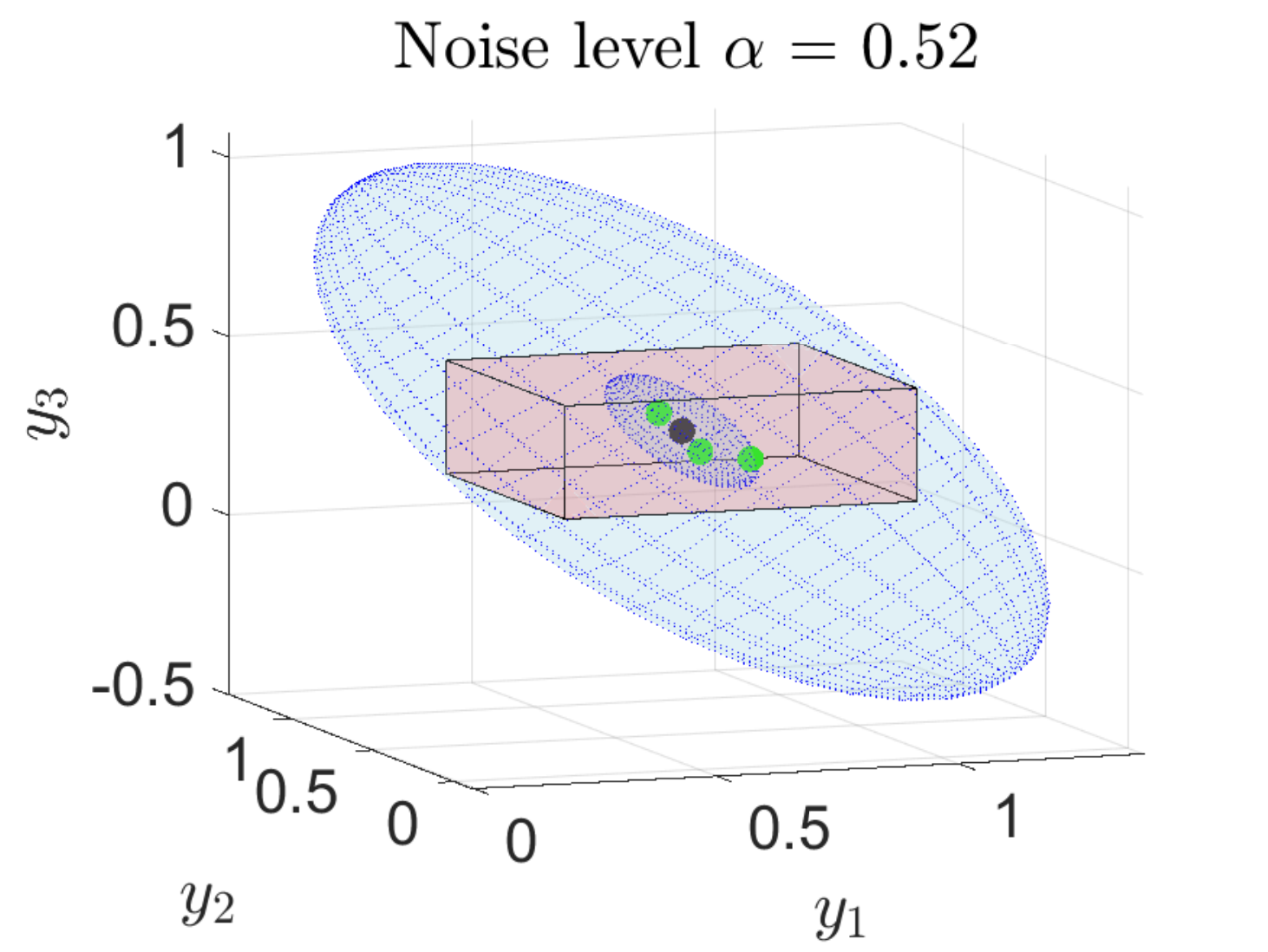}
			\caption{ Generating a typical problem instance: the black dot represents the vector $\theta(x^*;R)$ of true distance measurements; the red rectangle is the set $\theta(x^*;R) \Big[ 1-\alpha , 1+\alpha \Big]$; the largest blue ellipse $\mathcal{E}(\theta(x^*;R) ,\hat{\Sigma})$ is obtained by randomly generating a positive definite $\hat{\Sigma}$; the smallest blue ellipse $\mathcal{E}(\theta(x^*;R) ,{\Sigma})$ is obtained by scaling $\hat{\Sigma}$ by $\lambda=\alpha^2 \min_i\{  \theta(x^*,R)_i^2/\hat{\Sigma}_{i,i}\}$ so $\Sigma:=\lambda \hat{\Sigma}$; the three green dots represent the measurements  $y^{(1)},y^{(2)},y^{(3)}$ generated according to model~\eqref{eqn:measurement_generation}. 	}
			\label{figure:experimental_setup_plot}
		\end{center}
	\end{figure}
	\item Given an uncertainty region $\mathcal{E}(0,\Sigma)$  and anchors $\{a_m\}_m$ we generate three measurements $y^{(1)},y^{(2)},y^{(3)}$
	by adding a random perturbation $u^{(1)},u^{(2)},u^{(3)} \in \mathcal{E}(0,\Sigma)$ to the true measurements $||a_m-x^*||$, that is,
	\begin{align}
	 y^{(l)}:= \theta(x^*;R)+u^{(l)},\,\, u^{(l)} \in \mathcal{E}(0,\Sigma).
	\label{eqn:measurement_generation}
	\end{align}
	We sample three measurements  $y_m^{(1)},y_m^{(2)},y_m^{(3)}$ instead of one such that our experiments are more reliable, that is, our setup  accommodates scenarios where the measurements can be sampled from different regions of the ellipsoidal uncertainty region $ \mathcal{E}(0,\Sigma)$, see Figure~\ref{figure:experimental_setup_plot}. 
\end{itemize} 

\vspace*{1ex}

\noindent\textbf{\textcolor{black}{Grid Approximation of $\mathcal{X}$.}}
Consider the set of measurements~\eqref{eq:model}. One naive option to solve problem~\eqref{eq:core} is simply to perform a grid search over $(x,u)$ pairs: given a measurement vector $y\in \mathbf{R}^M$ generate possible target positions $x$ and define $u$ as 
\begin{equation}
u_m=y_m-||r_m+x||.
\end{equation}
\textcolor{black}{If  $u \in \mathcal{E}(0,\Sigma)$ and $||x-r_m||^2 \geq \Sigma_{m,m}$, then $x\in \mathcal{X}$. This grid method can be useful to approximate $\mathcal{X}$ by a finite set, which we denote by $\mathcal{X}_F$ (the $F$ in $\mathcal{X}_F$ stands for finite). In our context, set $\mathcal{X}_F$ is useful to compare both methods as it serves as a proxy for the true set $\mathcal{X}$. To construct the finite set $\mathcal{X}_F$ we need an initial over-estimator of $\mathcal{X}$, say $\overline {\mathcal X}$, to define a finite grid of $x$ points in $\overline {\mathcal X}_{\text{LFR}}$. After computing rectangles $\overline {\mathcal X}_{\text{LFR}}$ and $\overline {\mathcal X}_{\text{SDP}}$  we create a grid of $400^2$ linearly spaced points $x:=(x_1,x_2)$ that spans the rectangle  $\overline {\mathcal X}_{\text{SDP}} \bigcup\overline {\mathcal X}_{\text{LFR}} $. Set $\mathcal{X}_F$  is simply the point cloud 
	of target positions for which the aforementioned naive method returned a valid uncertainty vector $u\in \mathcal{E}(0,\Sigma)$ when $x$ varies over $ \overline {\mathcal X}_{\text{SDP}} \bigcup\overline {\mathcal X}_{\text{LFR}} $ and $||x-r_m||^2\geq \Sigma_{m,m}$. }
	\vspace*{1ex}
	
	\noindent\textbf{\textcolor{black}{Performance Metric.}}
\textcolor{black}{	By construction, the true set $\mathcal{X}$ can be over-approximated by either  $\overline {\mathcal X}_{\text{LFR}}$ or  $\overline {\mathcal X}_{\text{SDP}}$. Ideally, we would like for set $\overline {\mathcal X}_{\text{LFR}}$ to be ``smaller'' than $\overline {\mathcal X}_{\text{SDP}}$, in the sense that it occupies a smaller area. Let $|A|$ denote the area (Lebesgue measure) of an arbitrary closed set $A\subseteq \mathbf{R}^M$. In this case the difference $|\overline {\mathcal X}_{\text{SDP}}|-|\overline {\mathcal X}_{\text{LFR}}|$ measures the accuracy gain (in squared meters) of our method with respect to the benchmark standard. Since we are computing rectangular approximations of $\mathcal{X}$ let $R(\mathcal{X}_F)\subseteq \mathbf{R}^2$ denote the tightest rectangle~\footnote{\textcolor{black}{Rectangle $R(\mathcal{X}_F)$ can be easily computed since $\mathcal{X}_F$ is just a finite set.}} that encloses $\mathcal{X}_F$ -- see Figure~\ref{fig:worst_average_best} for examples of  $\mathcal{X}_F$ and  $R(\mathcal{X}_F)$. Given $R(\mathcal{X}_F)$,  we can measure the accuracy gain of our method with respect to the actual area of the rectangular approximation of $\mathcal{X}_F$.  This is a more reliable metric as it introduces a sense of scale to the difference $|\overline {\mathcal X}_{\text{SDP}}|-|\overline {\mathcal X}_{\text{LFR}}|$. In concrete, we define the gain factor $G\in \mathbf{R}$ as
	\begin{align}
	G:=\frac{|\overline {\mathcal X}_{\text{SDP}}|-|\overline {\mathcal X}_{\text{LFR}}|}{|R(\mathcal{X}_F)|}.
	\label{eqn:gain_factor}
	\end{align}
	Ideally we would like to have a positive gain $G>0$, the larger the better. For example a gain of $G=13$ units says that our rectangle  $\overline {\mathcal X}_{\text{LFR}}$ occupies an area that is smaller than  $\overline {\mathcal X}_{\text{SDP}}$  by $G=13$ units of $R(\mathcal{X}_F)$. So if we approximate $\mathcal{X}_F$ by the rectangle $R(\mathcal{X}_F)$ then the benchmark $\overline {\mathcal X}_{\text{SDP}}$ adds $G=13$ units of $R(\mathcal{X}_F)$ to our rectangle $\overline {\mathcal X}_{\text{LFR}}$ -- right plot of Figure~\ref{fig:worst_average_best} (a). }
\\~\\
\noindent\textbf{Results.} 
\textcolor{black}{Figure~\ref{fig:numerical_results} plots statistics of the gain factor~\eqref{eqn:gain_factor}. The solid blue curve represents the mean value of $G$ while the filled blue region represents the $90\%$ confidence interval.
\begin{figure}[h]
	\centering
	\includegraphics[width=9cm]{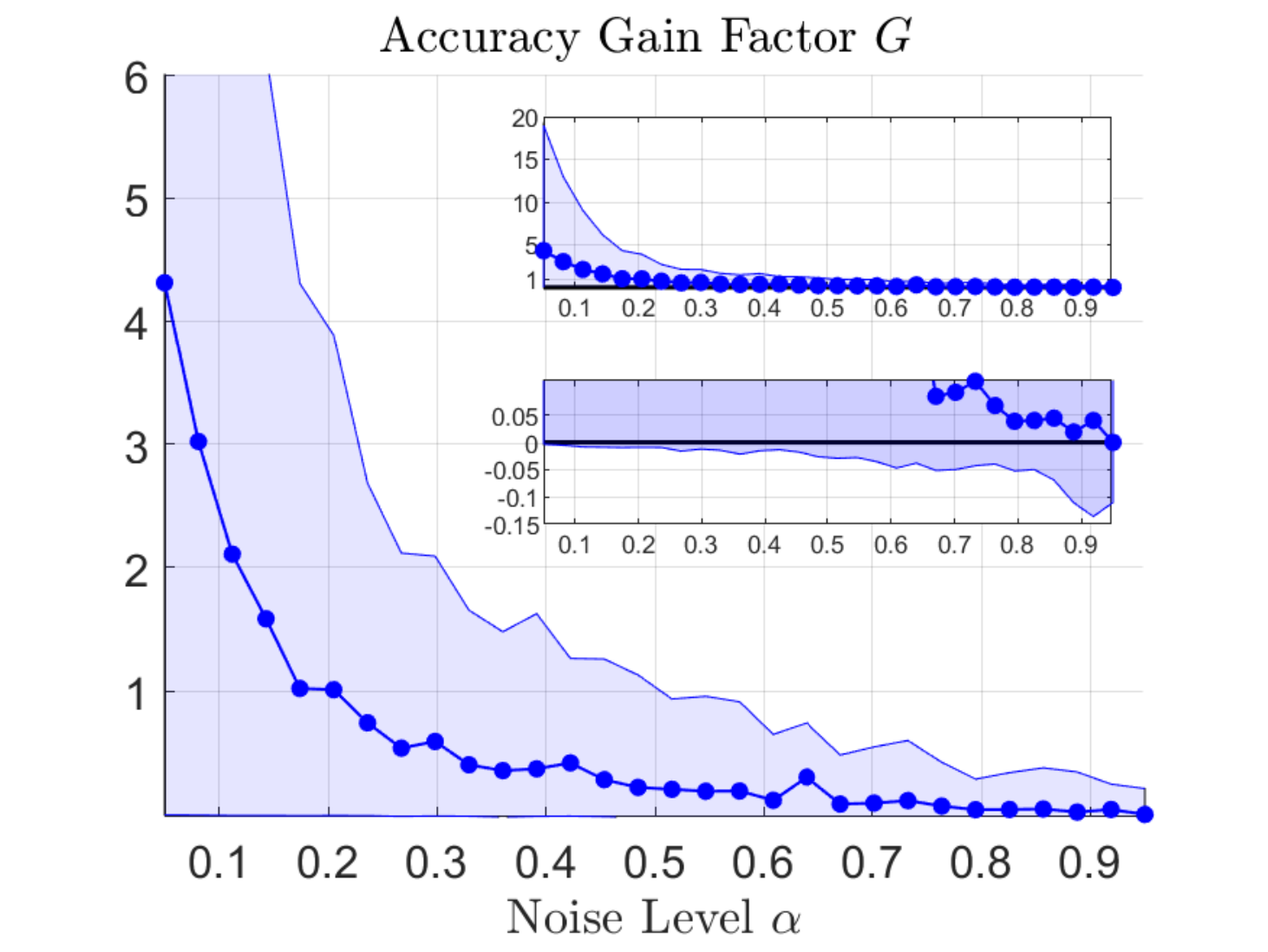}
	\caption{\textcolor{black}{Statistics of the gain factor $G$ as a function of the noise level $\alpha$: how many units of $R(\mathcal{X}_F)$ the benchmark   $\overline {\mathcal X}_{\text{SDP}}$ adds to our relaxation  $\overline {\mathcal X}_{\text{LFR}}$? The solid blue curve with circles represent the average of $G$. The light blue region represents the $90\%$ confidence interval ($5\%$ -- $95\%$ percentiles). The top subplot zooms out on the central figure to fully display the $95\%$ percentile of $G$. The bottom subplot zooms in on the central figure to fully display the  $5\%$ percentile of $G$.}}
	\label{fig:numerical_results}
\end{figure}
\\~\\
For any noise level $\alpha$ there is a benefit in using our LFR relaxation since, in average, the gain factor $G$ is always positive ($G\in(0, 4.31)$). In low noise regimes ($\alpha\leq 0.2 $) the rectangle  $\overline {\mathcal X}_{\text{LFR}}$  is always smaller than  $\overline {\mathcal X}_{\text{SDP}}$ by, at least, one unit of $\mathcal{X}_F$ ($G\in(1, 4.31)$). This means that the benchmark  $\overline {\mathcal X}_{\text{SDP}}$ delivers a loose approximation, in the sense that the rectangle $R(\mathcal{X}_F)$ fits in the extra space induced by $\overline {\mathcal X}_{\text{SDP}}$. For lower values of $\alpha$ the factor $G$ increases rapidly and we can get much higher accuracy gains. For example when $\alpha=0.08$ the rectangle  $\overline {\mathcal X}_{\text{LFR}}$  is much smaller than  $\overline {\mathcal X}_{\text{SDP}}$ since the slack $\overline {\mathcal X}_{\text{LFR}}\setminus \overline {\mathcal X}_{\text{SDP}}$ encompasses $G\approx 3$ units of $R(\mathcal{X}_F)$ -- see the central plot of Figure~\ref{fig:worst_average_best} (a). If we average the mean gain curve of  $G$ (solid blue line) for $\alpha\leq 0.2$, we find that our method delivers a rectangle $\overline {\mathcal X}_{\text{LFR}}$ which approximately doubles the accuracy of $\overline {\mathcal X}_{\text{SDP}}$, that is, $G\approx 2$.
\\~\\ 
 For moderate noise levels ($\alpha\in(0.2,0.67)$) the gain factor $G$ starts to slowly decay, which means that the extra accuracy delivered by $\overline {\mathcal X}_{\text{LFR}}$ tends to be less significant. For example for $\alpha=0.42$ our method now improves the benchmark by less than half a unit of $R(\mathcal{X}_F)$ ($G\approx 0.42$) -- see the central plot of Figure~\ref{fig:worst_average_best} (b). In high-noise regimes  ($\alpha\in(0.67,1)$) the gain factor $G$ tends to oscillate around the value $G\approx 0.05$ (see the bottom alternative axis in Figure~\ref{fig:numerical_results}), so the extra benefit of our method is minimal. These findings suggest that, for an high-noise level $\alpha$, the performance of both relaxations tends to be similar as can be confirmed in the central plot of Figure~\ref{fig:worst_average_best} (c). This is intuitive since the localization problem becomes harder in the sense that, for an increasing amount of measurement noise ($\alpha$ large), the area covered by $\mathcal{X}_F$ tends to increase -- observe how the area of $R(\mathcal{X}_F)$ (black rectangle) increases, when moving down in Figure~\ref{fig:worst_average_best}.
 \\~\\
 The left and right plots of Figure~\ref{fig:numerical_results} also display experiments where the ratio $G$ achieve the lowest ($5\%$) and highest ($95\%$)  percentiles as displayed in two alternatives axis in figure~\ref{fig:numerical_results}. Note that the high $95\%$ percentile can be fairly high, while the $5\%$ percentile is only moderately low. This means that, on the best case, our method can have considerable improvements regarding the benchmark approach  ($G>0$ high) while being only slightly inferior ($G<0$ close to zero) on the worst instances. Consider, for example, the high-noise regime $(\alpha=0.8)$ of  figure 5 (b). The rectangle $\overline {\mathcal X}_{\text{LFR}}$ has a negative worst case gain ($G=-0.05$), which is an order of magnitude smaller than the best case gain ($G=0.3$).
\begin{figure}[h]
	\centering
	\begin{subfigure}{0.5\textwidth}
		\centering
		\includegraphics[width=8.7cm,height=3.7cm]{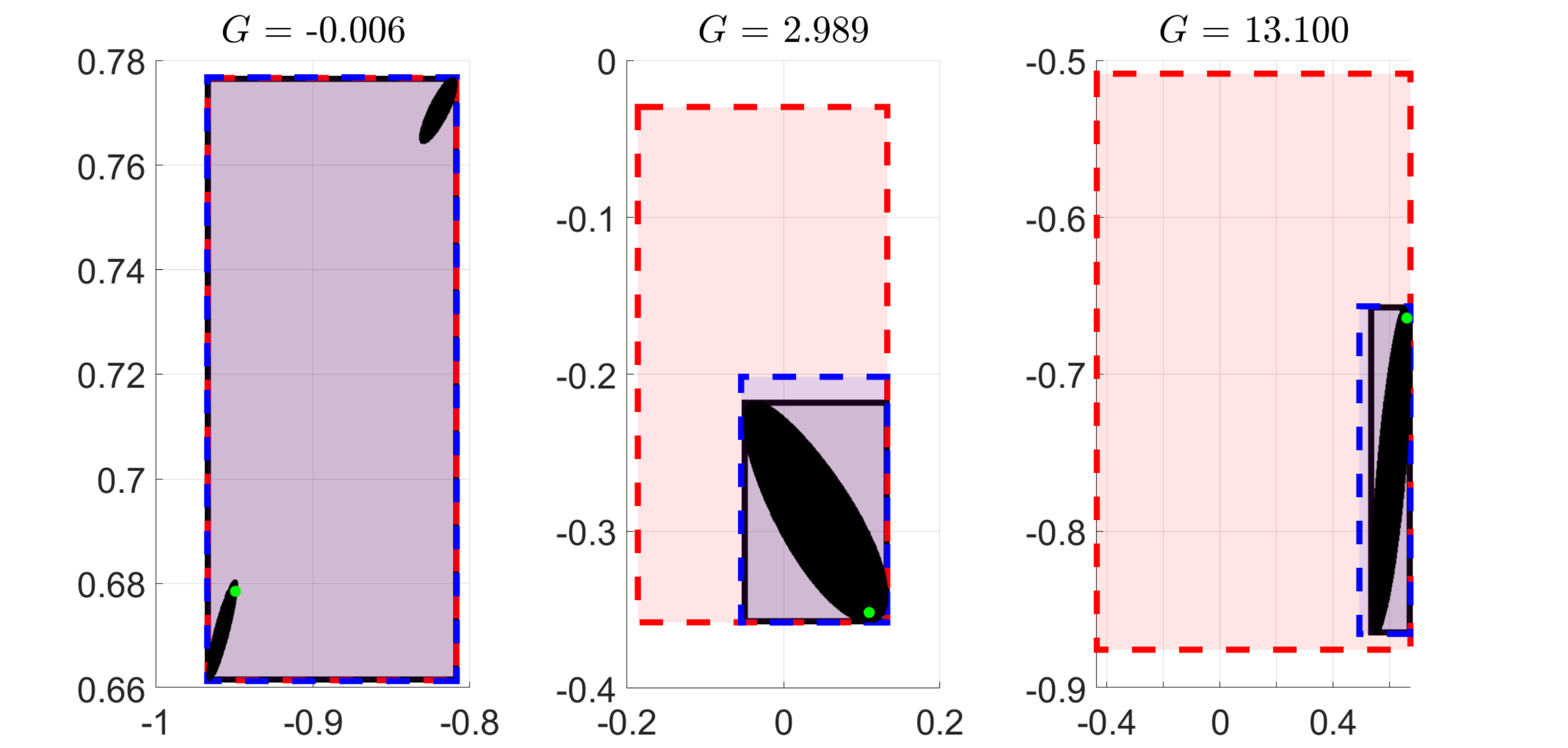}
		\caption{ Low noise $\alpha=0.08$.   }
		\vspace{0.2cm}
		\label{fig_a}
	\end{subfigure}
	\begin{subfigure}{0.5\textwidth}
		\centering
		\includegraphics[width=8.7cm,height=3.7cm]{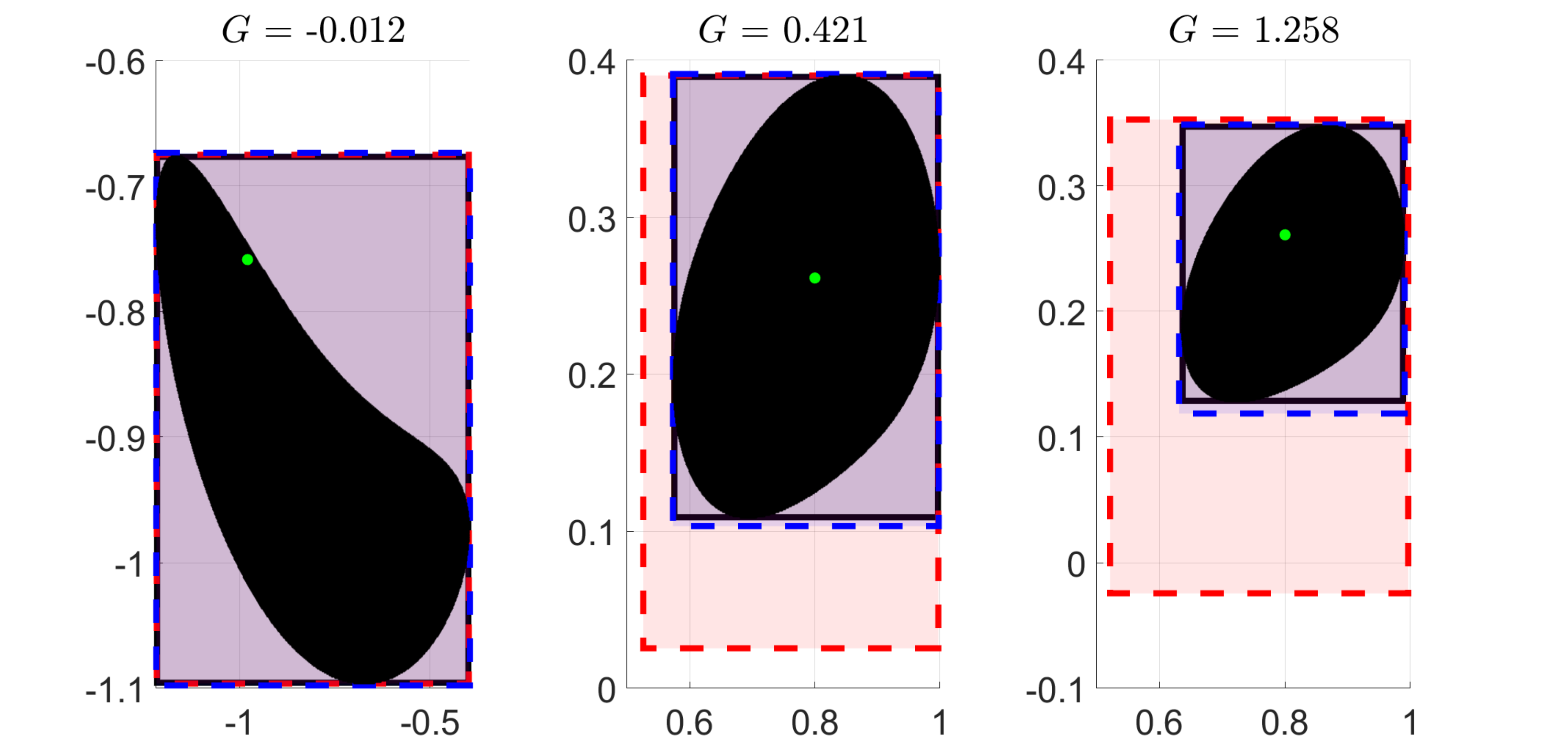}
		\caption{ Moderate noise $\alpha=0.42$.  }
		\vspace{0.2cm}
		\label{fig_b}
	\end{subfigure}  
	\begin{subfigure}{0.5\textwidth}
		\centering
		\includegraphics[width=8.7cm,height=3.7cm]{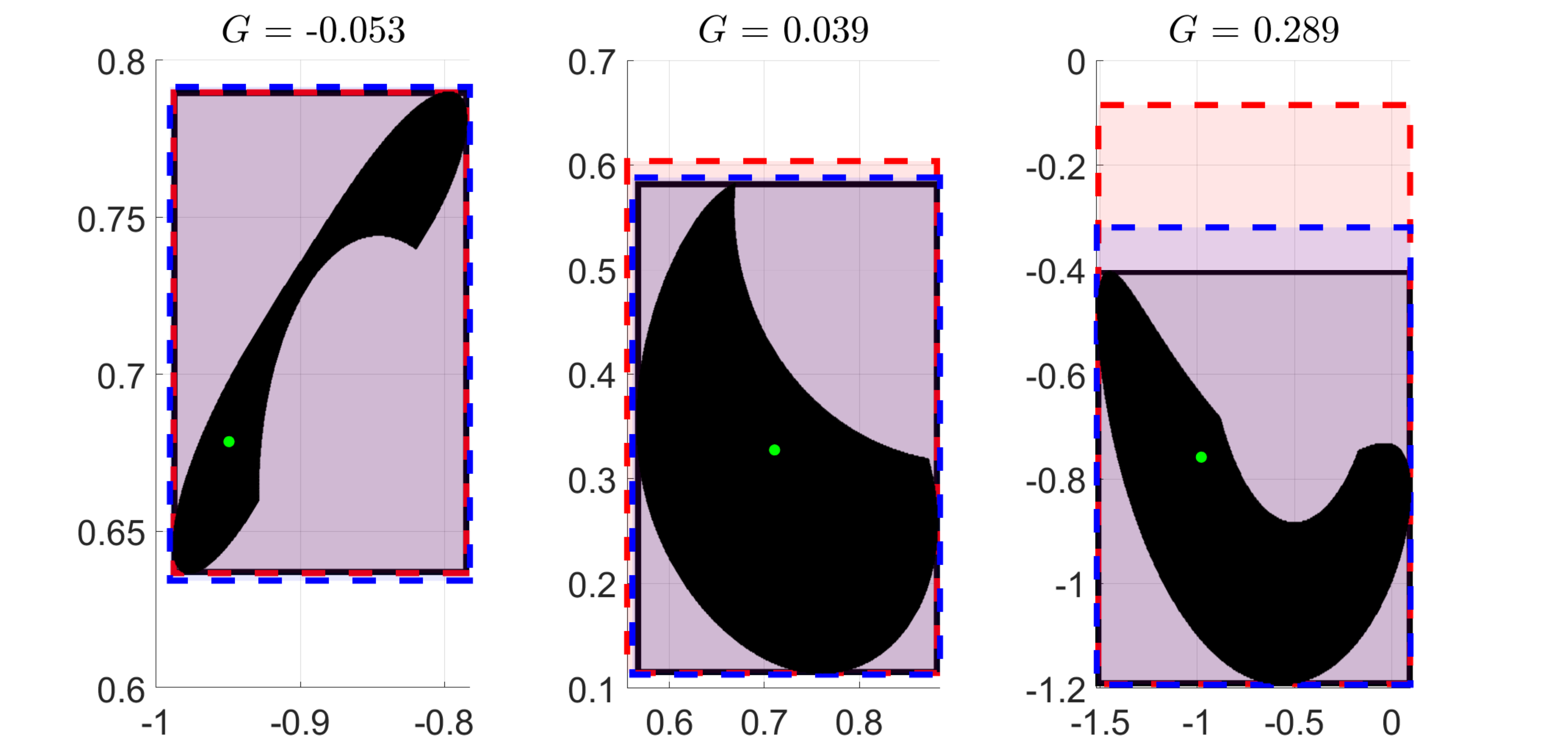}
		\caption{ High noise $\alpha=0.8$.  }
		\label{fig_c}
	\end{subfigure}
	\caption{\textcolor{black}{Experiments with a low, average and high gain $G$ for low, moderate and high values of noise level $\alpha$. We plot the target $x^*$ (green dot), the grid approximation ${\mathcal{X}}_F$ (black region), the tightest rectangle  $R({\mathcal{X}}_F)$ enclosing ${\mathcal{X}}_F$ (black rectangle), the LFR rectangle $\overline {\mathcal X}_{\text{LFR}}$ (blue region) and the SDP rectangle $\overline {\mathcal X}_{\text{SDP}}$  (red region). We want a rectangle $\overline {\mathcal X}_{\text{LFR}}$ that includes ${\mathcal{X}}_F$ while being smaller than the benchmark $\overline {\mathcal X}_{\text{SDP}}$ and not much larger than $R({\mathcal{X}}_F)$. The average gain factor $G$ increases by an order of magnitude from low to moderate and moderate to high values of $\alpha$ -- central plots of all three figures.}	}
	\label{fig:worst_average_best}
\end{figure}
\\~\\
The higher performance of the LFR approach has an associated computational cost. While the SDP approaches takes, on average,  \textcolor{black}{$1.95$sec} to run our method takes about \textcolor{black}{$2.7$sec}. This follows because (1) our method solves an SDP where the dimension of $X$ has a quadratic dependence on the number of anchors $M$ and (2) we impose the flattening inequality, that is, an LMI in $\mathcal{S}^{2M}$. Note that the $X$ matrix coming from the standard method only exhibits a linear dependence on $M$. Furthermore there is no flattening \textcolor{black}{inequality} in~\eqref{eq:coress}. This issue might be secondary for problems with a low number of anchors, but when $M$ is large \textcolor{black}{ it becomes important to study the computational limits of our approach. To approach this issue, figure~\ref{fig:computing_times} plots the computational time (in seconds) of both methods for an increasing number of anchors $M\in\{1,\dots,10\}$.  As seen, our method is clearly slower than the standard relaxation; yet figure~\ref{fig:numerical_results} shows that our method stills runs in reasonable time (say $1$ to $2$ minutes) for a considerable number of anchors (say $M=9$  or $M=10$). Future work includes studying the scalability of our method when the number of anchors $M$ is high.}
\begin{figure}[h]
	\centering
	\includegraphics[width=7cm]{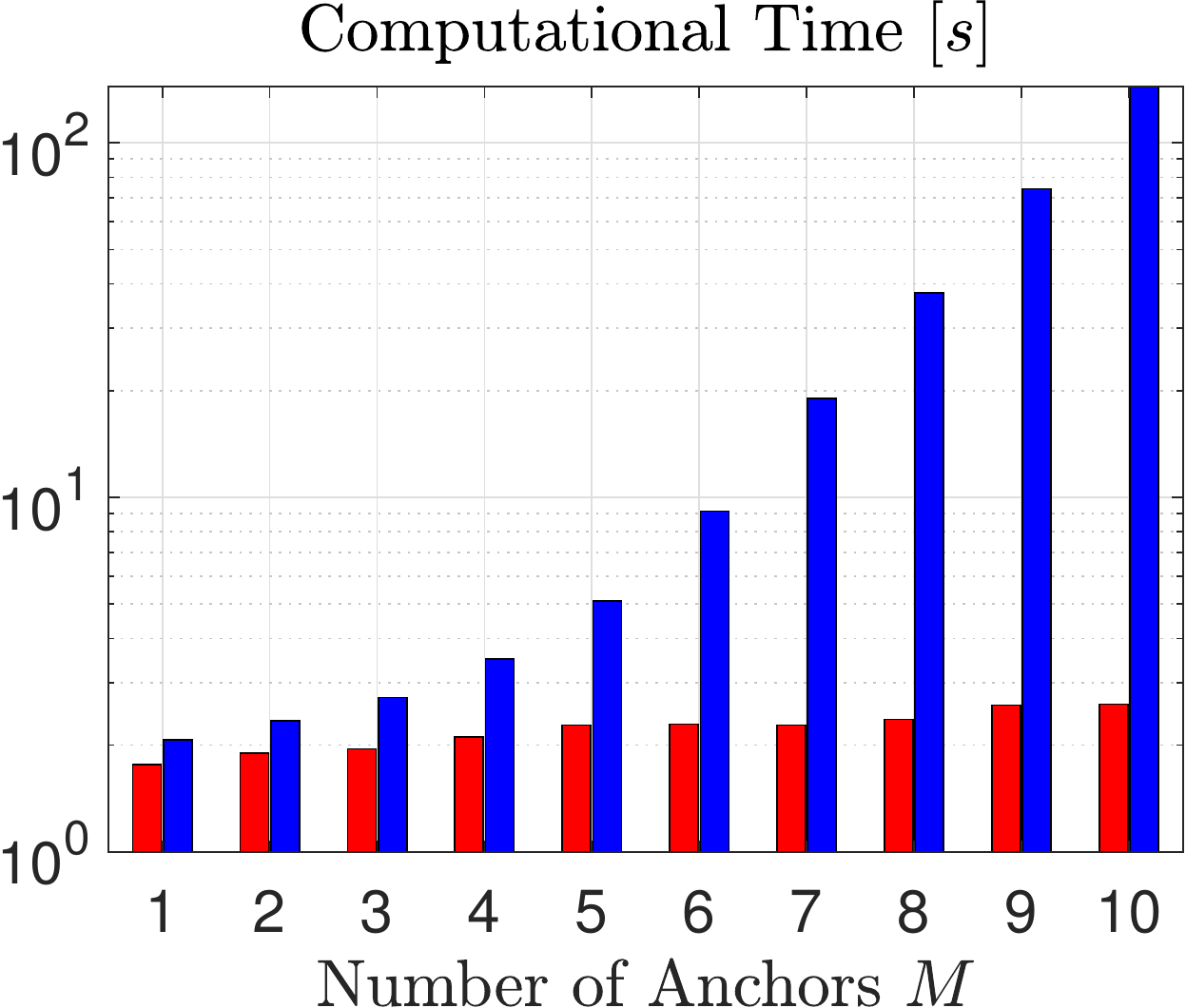}
	\caption{\textcolor{black}{Average computational time of both relaxations for $10$ Monte Carlo trials. The blue bars represent our LFR relaxation and the red bars represent the standard relaxation. Although our method is slower (the blue bars are higher than the red ones), it stills runs in acceptable time (say $1$ to $2$ minutes) for a high number of anchors (say $M=9$  or even $M=10$).  }  }
	\label{fig:computing_times}
\end{figure}
}

\section{\textcolor{black}{Extensions - High Order Approximation of $\mathcal{X}$}}
\label{sec:extensions}
\textcolor{black}{Section~\ref{sec:introduction} considered the problem of outer approximating set $\mathcal{X}$ by a simple rectangular set $\overline{\mathcal{X}}$. The rectangle defined in~\eqref{eq:rect} is a convex polyhedron since it may be written as
	\begin{align}
	\overline{\mathcal{X}}=\Big\{x:   s_l^Tx \leq \beta_l(\Sigma,R,y),\enspace l=1,\dots,2d \Big\}
	\label{eqn:Rectangle_approximation}
	\end{align}
	with $2d$ search directions ($d$ is the ambient dimension where the target lies so $x\in \mathbf{R}^d$) given by
	\begin{align*}
	 \begin{bmatrix} s_1 & \dots & s_d \end{bmatrix}=I_d,\enspace \begin{bmatrix} s_{d+1} & \dots & s_{2d} \end{bmatrix}=-I_d
	\end{align*}
	and extension 	$\beta_l(\Sigma,R,y)$ a upper bound on problem~\eqref{eq:core1} when the search direction $s$ is equal to $s_l$.  The notation $\beta_l(\Sigma,R,y)$ indicates that, in order to upper bound the extension of $\mathcal{X}$ in direction $s_l$, we must use the data of the problem; so, matrix $\Sigma$ that defines the uncertainty region $\mathcal{E}(0,\Sigma)$, the measurements $y$ and the positions of the reference landmarks $R=(r_1,\dots,r_M)$.
\\~\\
We can generalize the rectangle in~\eqref{eq:rect} by creating a outer-approximation $\overline{\mathcal{X}}$ that bounds $\mathcal{X}$ along \textit{more} search directions $s_l$. In concrete assume we increment the set of search directions to include $T$ additional directions $s_{2d+1},\dots,s_{2d+T}$. Let $\overline{\mathcal{X}}^T$ denote the generalization of~\eqref{eqn:Rectangle_approximation} when including directions $s_{2d+1},\dots,s_{2d+T}$, that is,
	\begin{align}
\overline{\mathcal{X}}^T=\Big\{x:   s_l^Tx \leq \beta_l(\Sigma,R,y),\enspace l=1,\dots,2d+T \Big\}.
\label{eqn:Rectangle_approximation_2}
\end{align}
By construction, the sequence of polyhedra $\{\overline{\mathcal{X}}^T\}_{T\geq 1}$ forms a decreasing sequence of outer-estimators of $\mathcal{X}$, that is
\begin{equation}
\forall\,T\geq 1:\enspace {\mathcal{X}} \subseteq \overline{ \mathcal{X}}^{T} \subseteq \overline{ \mathcal{X}}^{T-1} \subseteq \dots \subseteq \overline{ \mathcal{X}}^{1} \subseteq  \overline{\mathcal{X}}^{0}=\overline{ \mathcal{X}}. 
\label{eqn:sequence}
\end{equation}
In words, result~\eqref{eqn:sequence} says that using $T$ additional search directions can only provide a tighter approximation of set $\mathcal{X}$. Figure~\ref{fig:perfomance_computation} plots a numerical example that highlights the performance gains of a tighter approximation $\overline{ \mathcal{X}}^T$.
\begin{figure}[h!]
	\centering
	\includegraphics[width=6cm]{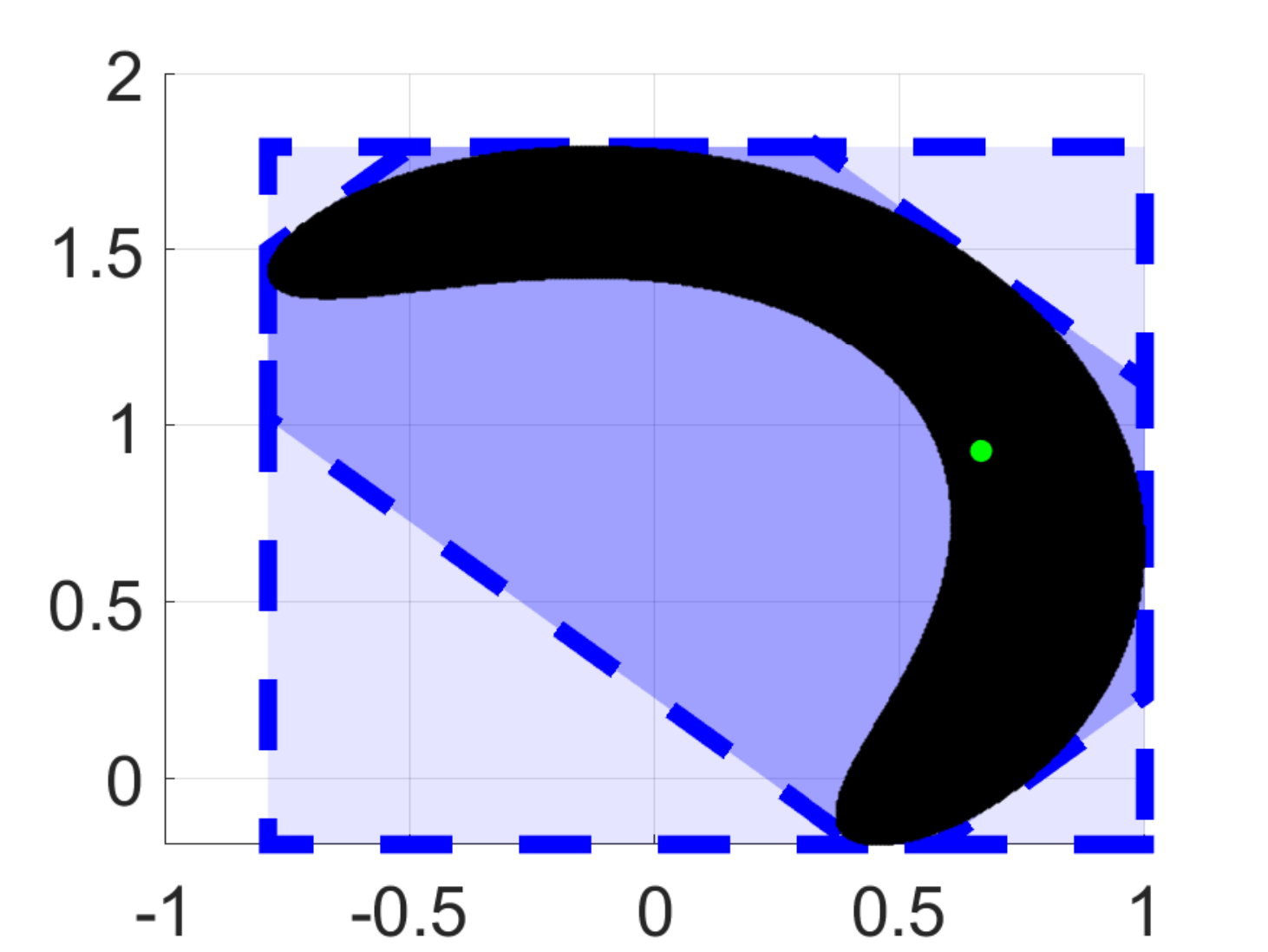}
	\caption{\textcolor{black}{{ Higher order approximation of $\mathcal{X}$ for $T=4$. We plot the target $x^*$ (green dot), the grid approximation ${\mathcal{X}}_F$ (black region), the LFR rectangle $\overline {\mathcal X}_{\text{LFR}}$ (light blue region) and the LFR polyhedron $\overline {\mathcal X}_{\text{LFR}}^{T}$ (dark blue region). The polyhedron $\overline {\mathcal X}_{\text{LFR}}^{T}$  yields a tigher approximation on  ${\mathcal{X}}_F$ by improving the rectangle  $\overline {\mathcal X}_{\text{LFR}}$ along the diagonals $s\in\{(\pm \sqrt{2}/2,\pm \sqrt{2}/2)\}$.	
}}}
	\label{fig:perfomance_computation}
\end{figure}
Result~\eqref{eqn:sequence} implies a clear compromise between performance and computation: when $T$ increases we have a tighter approximation $\overline{ \mathcal{X}}^{T} $ of $\mathcal{X}$ at the expense of computing $T$ additional upper bounds on problem~\eqref{eq:core1}. Future work includes studying the performance-computation trade off of a high-order approximation $\overline{\mathcal{X}}^T$.
}
\section{Conclusion}
\label{sec:conclusion}
This paper considers a different approach for target localization: instead of assuming a fixed noise density and searching for a single point estimate, we track the set of \textit{all} target positions that are consistent with the data model and with two mild assumptions related to the non-negativeness of the measurements and the boundedness of the additive noise. It turns out that this approach is equivalent to tracking the set of ML estimators parametrized by different (unknown) noise densities. Our approach to bound the set of possible targets is to design a polyhedral outer approximation, which is obtained by relaxing a non-convex quadratic program. Our relaxation uses Linear Fractional Representations to model and re-parametrize  the uncertainty vector in the additive data model. Numerical experiments with a rectangular approximation and moderate noise, show that our relaxation tends to outperform a standard SDP relaxation.
\bibliographystyle{IEEEtran}

\bibliography{IEEEabrv,paperLFR}

\appendices
\section{Phrasing the map $\phi_{x,z}$ in~\eqref{eq:defphi} as the LFR in~\eqref{eq:phiLFR}}
\label{ap:step1}
We write the map $\phi_{x,z}$ as an LFR by writing each component of $\phi_{x,z}$ as an elementary LFR and then by stacking terms. In fact, our derivation uses two properties: LFRs are closed under stackings and closed under compositions with linear maps.
\paragraph*{Stacking LFRs} Stacking $K$ LFRs yields an LFR:
\begin{equation}
    \begin{bmatrix} \begin{bmatrix}
  \begin{array}{@{}c|c@{}}
C_1 & d_1 \\
\hline
B_1 & a_1 
\end{array}
\end{bmatrix}_{[p]}(U) \\ \vdots \\ \begin{bmatrix}
  \begin{array}{@{}c|c@{}}
C_K & d_K \\
\hline
B_K & a_K 
\end{array}
\end{bmatrix}_{[p]}(U) \end{bmatrix} =
 \begin{bmatrix}
    \begin{array}{@{}c|c@{}}
  \begin{array}{ccc} C_1 \\ & \ddots \\ & & C_K \end{array} & \begin{array}{c} d_1 \\ \vdots \\ d_K \end{array} \\
  \hline
\begin{array}{ccc} B_1 \\ & \ddots \\ & & B_K \end{array} & \begin{array}{c} a_1 \\ \vdots \\ a_K \end{array} 
    \end{array}
  \end{bmatrix}_{[pK]} \hspace{-0.3cm} (U).
  \label{eq:util1}
\end{equation}
The output LFR is well posed ($I- \text{diag}(C_1,\dots,C_K) (I_{pK} \otimes U)$ invertible for any $U\in \mathcal{U}$  ) if and only if each of the $K$ individual LFRs are well defined ($I-C_k(I_p \otimes U)$ invertible for any $U\in \mathcal{U}$ and index $k$).
When the LFRs being stacked share the same header (same $C_k$ and $d_k$), the formula actually simplifies to
\begin{equation}
    \begin{bmatrix} \begin{bmatrix}
    \begin{array}{@{}c|c@{}}
  C & d \\
  \hline
B_1 & a_1 
    \end{array}
  \end{bmatrix}_{[p]}(U) \\ \vdots \\ \begin{bmatrix}
     \begin{array}{@{}c|c@{}}
  C & d \\
  \hline
B_K & a_K 
    \end{array}
  \end{bmatrix}_{[p]}(U) \end{bmatrix} =
  \begin{bmatrix}
     \begin{array}{@{}c|c@{}}
  C & d \\
  \hline
\begin{array}{c} B_1 \\ \vdots \\ B_K \end{array} & \begin{array}{c} a_1 \\ \vdots \\ a_K \end{array} 
    \end{array}
  \end{bmatrix}_{[p]}(U).
  \label{eq:util2}
\end{equation}
\paragraph*{Composing an LFR with a linear map} Composing an LFR with a linear map $U \mapsto G U H$ yields another LFR,
\begin{equation}
\begin{bmatrix}
\begin{array}{@{}c|c@{}}
C & d \\
\hline
B & a 
\end{array}
\end{bmatrix}_{[p]} \hspace{-0.2cm} (GUH) \hspace{-0.05cm} =\hspace{-0.1cm}
\begin{bmatrix}
\begin{array}{@{}c|c@{}}
  (I_p \otimes H) C (I_p \otimes G) & (I_p \otimes H) d \\
  \hline
B (I_p \otimes G) & a 
    \end{array}
  \end{bmatrix}_{[p]}\hspace{-0.2cm}(  U ).
  \label{eq:util3}
\end{equation}
The verification of~\eqref{eq:util1}, \eqref{eq:util2}, and \eqref{eq:util3} is omitted due to space constraints. Furthermore, for the equality in \eqref{eq:util3}, note that the left-hand side LFR is well defined ($I-C(I_p \otimes \{GUH\})$ invertible for any $U\in \mathcal{U}$ )  if and only if the right-hand side LFR is well defined ( matrix $I-(I_p \otimes H) C (I_p \otimes G)(I_p \otimes U)$ invertible for any $U\in \mathcal{U}$ ).
To phrase map $\phi_{x,z}$ as an LFR, we recognize a stacking structure $$\phi_{x,z}(U) = \begin{bmatrix} \phi_{x,z}^{(1)}(U) \\ \phi_{x,z}^{(2)}(U) \end{bmatrix},\enspace  \phi_{x,z}^{(1)}(U) =  \begin{bmatrix} y_1  - (U e_1)_1 \\  \vdots \\ y_M  - (U e_1)_M \end{bmatrix},$$
where the second auxiliary mapping $\phi_{x,z}^{(2)}(U) $ is given by 
{\small\begin{align}
\phi_{x,z}^{(2)}(U)  \hspace{-0.03cm} = \hspace{-0.1cm} 
\begin{bmatrix} y_1^2  - \left\| r_1 \right\|^2  + 2 r_1^T x  - z - 2 y_1 (U e_1)_1 + (U e_1)_1^2 \\ \vdots \\
y_M^2  - \left\| r_M \right\|^2  + 2 r_M^T x  - z - 2 y_M (U e_1)_M + (U e_1)_M^2  \end{bmatrix} \hspace{-0.15cm}. \nonumber
\end{align}}
 \paragraph*{$\phi_{x,z}^{(1)}$ as an LFR} We start by expressing the generic component $y_m - (U e_1)_m$ of $\phi_{x,z}^{(1)}$ as an elementary LFR
 \begin{equation} y_m - (U e_1)_m  =   
 \begin{bmatrix}
    \begin{array}{c|c}
  \begin{array}{cc} 0 & 1 \\ 0 & 0 \end{array} & \begin{array}{c} 0 \\ 1 \end{array} \\
  \hline
\begin{array}{cc} 0 & -1 \end{array} & y_m 
    \end{array}
  \end{bmatrix}_{[2]}( (U e_1)_m ). \label{eq:phi1}
  \end{equation}
  Now, noting that $(U e_1)_m$ (the $m$th component of the $M$-dimensional vector $U e_1$) can be written as $e_m^T U e_1$ (where $e_m$ is the $m$th column of $I_M$~\eqref{eq:IM}), we can interpret~\eqref{eq:phi1} as an LFR composed with a linear map. Property~\eqref{eq:util3} leads to
  \begin{align}{\tiny
y_m - (U e_1)_m} { \footnotesize =
 \begin{bmatrix} \begin{array}{@{}c|c@{}}
    (I_2 \otimes e_1) \begin{bmatrix} 0 & 1 \\ 0 & 0  \end{bmatrix} (I_2 \otimes e_m^T) & (I_2 \otimes e_1) \begin{bmatrix} 0 \\ 1 \end{bmatrix}  \\
  \hline
\begin{bmatrix} 0 & -1 \end{bmatrix} (I_2 \otimes e_m^T) \rule{0ex}{3ex} & y_m 
    \end{array}
  \end{bmatrix}_{[2]}\hspace{-0.3cm}( U  ).}    \label{eq:phi3}
  \end{align}
  Note that the LFR \eqref{eq:phi1} is always well defined since, for any $U$ matrix, the matrix $I_2-\begin{bmatrix} 0 & 1 \\ 0 & 0  \end{bmatrix} (Ue_1)_m$ is invertible since it is upper triangular with non-zeros on the main diagonal. This also implies that~\eqref{eq:phi3} is well defined for any $U$ due to property~\eqref{eq:util3}. Stacking LFRs~\eqref{eq:phi3} with $1\leq m\leq M$ yields
 \begin{equation}
     \phi_{x,z}^{(1)}(U) = \begin{bmatrix}
    \begin{array}{c|c}
  C & d \\
  \hline
B_1 & a_1
    \end{array}
  \end{bmatrix}_{[2M]}( U), \label{eq:LFRphi1}
 \end{equation}
 with $C$, $d$, $B_1$, and $a_1$ as given in~\eqref{eq:matLFR} and~\eqref{eq:LFRrow1}.

\paragraph*{$\phi_{x,z}^{(2)}$ as an LFR} We start with the elementary LFR
{\begin{align}
y_m^2  &- \left\| r_m \right\|^2  + 2 r_m^T x  - z - 2 y_m (U e_1)_m + (U e_1)_m^2 \nonumber \\  = & 
 \begin{bmatrix}
   \begin{array}{@{}c|c@{}}
  \begin{array}{cc} 0 & 1 \\ 0 & 0 \end{array} & \begin{array}{c} 0 \\ 1 \end{array} \\
  \hline
\begin{array}{cc} 1 & -2 y_m \end{array} & y_m^2  - \left\| r_m \right\|^2  + 2 r_m^T x  - z 
    \end{array}
  \end{bmatrix}_{[2]}\hspace{-0.2cm}( (U e_1)_m ). \nonumber
\end{align}}
 Repeating the steps that led to $\phi_{x,z}^{(1)}$, we use the composition property~\eqref{eq:util3} and then the stacking property~\eqref{eq:util1} to get
 \begin{equation}
     \phi_{x,z}^{(2)}(U) = \begin{bmatrix}
    \begin{array}{c|c}
  C & d \\
  \hline
B_2 & a_2
    \end{array}
  \end{bmatrix}_{[2M]}( U), \label{eq:LFRphi2}
 \end{equation}
 with $C$, $d$, $B_2$, and $a_2$ as given in~\eqref{eq:matLFR} and~\eqref{eq:LFRrow2}. Finally, if we stack~\eqref{eq:LFRphi1} and~\eqref{eq:LFRphi2} using property~\eqref{eq:util2}, we arrive at~\eqref{eq:phiLFR}.
\section{Deriving the flattening map $L_{{\mathcal U}_{[2M]}}$ in~\eqref{eq:thefmap}}
\label{ap:step2}

Beginning at~\eqref{eq:fmap1}, we note 
\begin{align*}
v = (I_{2M} \otimes U) w  &\Leftrightarrow { \begin{bmatrix} v_1 \\ \vdots \\ v_{2M} \end{bmatrix} = \begin{bmatrix} U \\ & \ddots \\ & & U \end{bmatrix}
\begin{bmatrix} w_1 \\ \vdots \\ w_{2M} \end{bmatrix} }\\
&  \Leftrightarrow { \underbrace{\begin{bmatrix} v_1 & \cdots & v_{2M} \end{bmatrix}}_{V} = U \underbrace{\begin{bmatrix} w_1 & \cdots & w_{2M} \end{bmatrix}}_{W} } \\
& \Leftrightarrow  \Sigma^{-1/2} V = Z W,\enspace Z:= \Sigma^{-1/2} U.
\end{align*}
Matrix $U$ belongs to the set ${\mathcal U} = \{ U \colon U^T \Sigma^{-1} U \preceq I \}$, which implies that matrix $Z$ is in the set ${\mathcal Z} = \{ Z \colon Z^T Z \preceq I \}$. Thus, 
\begin{align*}
\exists \,\, U\in \mathcal{U}:\,\,  v = (I_{2M} \otimes U) w &\Leftrightarrow \exists \,\,Z\in \mathcal{Z}: \,\,  \Sigma^{-1/2} V = Z W, \\
 & \Leftrightarrow \, W^T W - V^T \Sigma^{-1} V \succeq 0,
\end{align*}
where the last equivalence uses Lemma 3 (ii) in~\cite{RANTZER19967}. Note that this result generalizes that of example~\eqref{eqn:aux_example}. Finally, 
\begin{align*}
W^T W & = 
\sum_{m = 1}^M E_m w w^T E_m^T,\enspace \enspace V^T \Sigma^{-1} V  =  \sum_{m = 1}^M F_m v v^T F_m^T
\end{align*}
where $E_m = I_{2M} \otimes e_m^T$ and $F_m = I_{2M} \otimes e_m^T \Sigma^{-1/2}$.  In sum, we have shown that $v = (I_{2M} \otimes U) w$ for some $U \in {\mathcal U}$ if and only if $${\mathcal L}_{{\mathcal U}_{[2M]}} \left( \begin{bmatrix} v \\ w \end{bmatrix} \begin{bmatrix} v \\ w \end{bmatrix}^T \right) \succeq 0,$$
where the flattening map ${\mathcal L}_{{\mathcal U}_{[2M]}}$ is as in~\eqref{eq:thefmap}.
\section{Auxiliar matrices for the benchmark method}
\label{ap:aux_matrices}
The  matrices  appearing in reformulation~\eqref{eq:coress} are given by
{\begin{align}
\hat{S}&=\begin{bmatrix}  0 & 0 & 0 & s/2 \\    & 0 & 0 & 0 \\ & &  0 & 0 & \\ & & & 0 \end{bmatrix}, \enspace \hat{G}_m =\begin{bmatrix} 0 & 0 & 0 & 0 \\ & 0 & 0 & 0 \\  & & 0 & -e_m / 2 \\ & & & y_m \end{bmatrix}\nonumber\\ 
\hat{L}_m&=\begin{bmatrix} 0 & 0 & 0 & -r_m \\ & 0 & 0 & 1/2 \\ &  & 0 & 0 & \\ & & & \left\| r_m \right\|^2 - \Sigma_{mm}\end{bmatrix}, \enspace \hat{f}=\begin{bmatrix}  0 \\ 0 \\ 0 \\ 1 \end{bmatrix}  \nonumber\\ 
\hat{K} &=\begin{bmatrix} I_M & 0 & 0 & 0 \\ & 0 & 0 & -1/2 \\ & & 0 & 0 \\  & & & 0  \end{bmatrix}, \enspace  \hat{J} =\begin{bmatrix} 0 & 0 & 0 & 0 \\ & 0 & 0 & 0 \\ & & -\Sigma^{-1} & 0 \\ & & & 1 \end{bmatrix} \nonumber \\
\hat{H}_m&=\begin{bmatrix} 0 & 0 & 0 & r_m \\ & 0 & 0 & -1/2 \\ & & e_m e_m^T & -y_m e_m  \\ & & & y_m^2 - \left\| r_m \right\|^2  \end{bmatrix},
\end{align}} 
with $e_m$ the $m$-th column of the identity matrix $I_M$~\eqref{eq:IM}.

\end{document}